\documentclass[review,hidelinks,onefignum,onetabnum]{siamart251216}

\usepackage{lipsum}
\usepackage{amsfonts}
\usepackage{graphicx}
\usepackage{epstopdf}
\usepackage{algorithmic}
\ifpdf
  \DeclareGraphicsExtensions{.eps,.pdf,.png,.jpg}
\else
  \DeclareGraphicsExtensions{.eps}
\fi

\newsiamremark{remark}{Remark}
\newsiamremark{hypothesis}{Hypothesis}
\crefname{hypothesis}{Hypothesis}{Hypotheses}
\newsiamthm{claim}{Claim}
\newsiamremark{fact}{Fact}
\crefname{fact}{Fact}{Facts}

\headers{X. YUAN AND G. WU}{\sc RANDOMIZED KRYLOV SUBSPACE METHODS FOR SHIFTED LINEAR SYSTEMS}

\title{Randomized Krylov Subspace Methods for Shifted Linear Systems}

\author{Xiang Yuan\thanks{School of Mathematics, China University of Mining and Technology, 221116, Jiangsu, P.R. China.}
\and Gang Wu\thanks{Corresponding author. School of Mathematics,
China University of Mining and Technology, Xuzhou, 221116, Jiangsu, P.R. China.
E-mail: {\tt gangwu@cumt.edu.cn}. This work is supported by the National Natural Science Foundation of China under Grants 12671452 and 12271518.}}

\usepackage{amsopn}

\usepackage{graphicx}
\usepackage{subfigure}
\usepackage{amsmath}
\usepackage{booktabs} 
\usepackage{multirow}
\usepackage{array}
\usepackage{slashed}
\usepackage{amsmath,bm}
\usepackage{appendix}
\usepackage{epsfig}
\usepackage{fancyhdr}
\usepackage{graphicx}
\usepackage{multirow}
\usepackage{float}
\usepackage{amsmath,amssymb}
\usepackage{graphicx}
\usepackage{algorithm}
\usepackage{tcolorbox}
\usepackage{algorithmic}
\usepackage{tabularx} 
\usepackage{ragged2e} 
\usepackage{booktabs} 
\usepackage{tablefootnote}
\ifpdf
\hypersetup{
  pdftitle={Randomized Krylov Subspace Methods for Shifted Linear Systems},
  pdfauthor={X. YUAN AND G. WU}
}
\fi

\begin{document}
\nolinenumbers

\maketitle

\begin{abstract}
We propose several novel Krylov subspace algorithms termed shifted randomized GMRES (RandGMRES-Sh), shifted randomized FOM (RandFOM-Sh), shifted randomized GMRES with deflated restarting (RandGMRES-Sh-DR), and shifted randomized FOM with deflated restarting (RandFOM-Sh-DR), for solving shifted linear systems. These methods are designed to significantly reduce the computational overhead of orthogonal basis generation while preserving orthogonality under specific conditions. Besides, this unified framework makes it possible to solve all shifted linear systems at the same time by using a single shared basis. These new approaches combine multiple advanced techniques together effectively, such as randomized sketching, the control of collinear residual directions, and deflated restarting methods. Compared with the standard versions of traditional algorithms, the new algorithms show a steady but moderate improvement in convergence speed. Furthermore, we theoretically prove that although GMRES and sketched GMRES have different algorithmic solution steps, their iterates are all confined to and together span the same basic Krylov subspace. We also work out the theoretical relationship between the solutions produced by RandGMRES-Sh and RandFOM-Sh at the same iteration step before the first restart. Moreover, we strictly prove how the solutions of RandGMRES-Sh and RandFOM-Sh are related to those of their traditional versions, GMRES-Sh and FOM-Sh, respectively.
\end{abstract}

\begin{keywords}
randomized sketching, shifted systems, Krylov methods, GMRES, FOM, Deflated restarting.
\end{keywords}

\begin{MSCcodes}
68Q25, 68R10, 68U05
\end{MSCcodes}

\section{Introduction}
Let $A \in  \mathbb{C}^{n\times n}$ be nonsingular and nonhermitian in general and let $\sigma_j \in \mathbb{C}$ be such that the shifted matrix $\hat{A}_j=A+ \alpha_i I,\ i=1,2,\dots,\ell$ is nonsingular, too. We
consider the two systems
\begin{equation}
\hat{A}_i\mathbf{x}_i=\mathbf{b}     \qquad    i=1,2,\dots,\ell, \label{1.1}
\end{equation} 
 for several parameter $ \alpha_i $. In subsequent discussions, if $i=1$, system \eqref{1.1} will be referred to as the seed system, else system \eqref{1.1} will be termed the additional system and $l$ represents the number of additional system.
 In computational science and engineering, many key problems need to efficiently solve linear systems with shifted structures like (\ref{1.1}), where the parameter $ \alpha_i $ takes different values. Such problems are widely encountered in various fields, including the computation of multi-mass propagators for lattice fermions in quantum chromodynamics (QCD) \cite{23,24}, frequency response analysis in structural dynamics, parameter identification in control system design,\cite{36,37} and the numerical solution of time-dependent partial differential equations\cite{38,39} and exponential function of large matrices \cite{31}. Moreover, in large-scale network analysis and information retrieval, the core of the Google PageRank algorithm \cite{10} is essentially solving a specific type of linear system. Here, changes in the damping factor can also be seen as a shift parameter that affects both the convergence performance and the results of web page ranking. Traditional methods require solving each shifted system separately, which leads to a total computational cost that increases linearly with the number of shift parameters. This linear cost growth becomes a key bottleneck, especially for large-scale non-symmetric matrices, and greatly limits efficiency in scientific computing and data-intensive applications.

 The Krylov subspaces$$\mathcal{K}_{m}(\hat{A}_i, \mathbf{b})=\operatorname{span}\left\{\mathbf{b}, \hat{A}_i \mathbf{b}, \hat{A}_i^{2} \mathbf{b}, \ldots, \hat{A}_i^{m-1} \mathbf{b}\right\},\ i=1,2,\dots,\ell,$$for the seed and the add systems, respectively, are identical.
A basic approach to this problem was proposed by Andreas Frommer \cite{3}, who showed how the GMRES method could be used to make residuals collinear across all shifted systems. This work established a unified Krylov subspace framework for solving them at the same time. Building on this idea, Valeria Simoncini \cite{4} later found that the residuals produced by the FOM method are naturally collinear for shifted systems, and she proposed a new algorithm to make use of this inherent property. To solve the slow convergence of restarted GMRES, Ronald B. Morgan introduced the GMRES with deflated restarting (GMRES-DR) \cite{13,14} algorithm, which uses a deflated restarting technique. Later, he applied this improved solver within the collinear residual framework to efficiently solve shifted linear systems \cite{5}. More recently, Balabanov and Grigori developed the randomized GMRES (RandGMRES) \cite{1,19} and RandFOM \cite{2} algorithms by adding randomized sketching \cite{9} to the Gram-Schmidt orthogonalization process. Based on RandGMRES, Yongseok Jang et al. then created the Randomized FGMRES-DR \cite{8} method. In a related line of research, Nakatsukasa and Tropp proposed the sketched GMRES (sGMRES) method and a sketched Rayleigh-Ritz procedure. This further inspired Burke, G{\"u}ttel, and others to develop the sGMRES with deflated restarting (GMRES-SDR) \cite{6} algorithm.

This work makes five main contributions to the development of randomized Krylov solvers for shifted linear systems:
(1) In Section 2, we review the theoretical foundations and algorithmic frameworks of shifted GMRES (GMRES-Sh) and shifted FOM (FOM-Sh), and provide a systematic summary of sketching techniques used in Krylov subspace methods. This review lays the necessary background for the developments in the following sections.
(2) In Section 3, we enhance Andreas Frommer's GMRES-Sh \cite{3} and Valeria Simoncini's FOM-Sh \cite{4} algorithms with randomized sketching techniques \cite{12}. Through strict mathematical derivation, we develop a computationally efficient variant that keeps the original shifted solver framework.
(3) In Section 4, we extend Ronald B. Morgan's GMRES-DR \cite{14} and FOM-DR \cite{17} approaches by adding sketching methodology. Theoretical analysis shows that this extension maintains the original deflation properties while meeting the requirements of shifted solutions.
(4) In Section 5, we derive the solution relationships between GMRES-Sh and FOM-Sh, and thus establish the exact connections between their randomized versions (RandGMRES-Sh, RandFOM-Sh).
(5) In Section 6, extensive numerical experiments on PageRank, QCD and matrix exponential computation problems show that our proposed algorithms are much faster in computational performance than existing baseline methods across multiple large-scale datasets.
\section{Preliminary}\label{sec2}
In this chapter, we first review the basic frameworks of the shifted GMRES \cite{3} and shifted FOM \cite{4} algorithms. Next, we introduce the properties and selection criteria of random sketching matrices, and then use these matrices to review the random Arnoldi method \cite{1}. From the derivation process, we identify two Arnoldi relations that the method satisfies and analyze its computational cost in comparison with the standard Arnoldi method. For the existing RandGMRES \cite{1} and sGMRES \cite{7} methods, we prove their equivalence. Finally, to ensure the stability of the algorithm, we introduce the random Arnoldi method with re-orthogonalization \cite{41}, which provides a foundation for the subsequent sections of this paper.

\subsection{Shifted GMRES and Shifted FOM algorithms}
\subsubsection{A Shifted GMRES algorithm}
 In \cite{3}, Frommer and Glassner propose a shifted GMRES algorithm for shifted linear system. The key idea is to run the GMRES algorithm \cite{17,18} on the seed system and then generate approximate solutions to the additional systems imposing collinearity with the computed seed residual. However, the situation deteriorates as soon as restarts occur. More precisely, let the approximation of the seed system can be formulated as $\mathbf{x}_m^1=\mathbf{x}_0^1+V_m\mathbf{y}_m^1$. Then the GMRES residual  $\mathbf{r}_m^1$ of the seed system and the residuals $\mathbf{r}_m^{i}\ (i=2,3,\dots,\ell)$ of the additional systems will generally not be collinear anymore, so that $\mathcal{K}_{m}((A+\alpha_1I), \mathbf{b}) \ne \mathcal{K}_{m}(\hat{A}_i, \mathbf{b})$ for $i=2,3,\dots,\ell$. As a result, from the first restartion, the matrix multiplications for the additional systems can not be saved anymore.
 
 \renewcommand{\thealgorithm}{1}
\begin{algorithm}
\caption{A Restarted Shifted GMRES \cite{3}}
\begin{algorithmic}[1]\label{A11}
\REQUIRE 
   $A\in \mathbb{C} ^{n\times n} $, $\mathbf{b}\in \mathbb{C} ^{n\times 1} $, initial vector $\mathbf{x}_0=0$; maximum number of restarts $maxit$; the number of inner iterations $m$; a prescribed tolerance $tol$; the shifts $\alpha=\{\alpha_1,\alpha_2,\dots,\alpha_\ell\}$.
\ENSURE 
    $\mathbf{x}_m^i$ for $i=1,2,\dots\ \ell$.\\
   \STATE Set $\mathbf{r}_0^1=b$, $\beta_m^i=\left \| \mathbf{r}_0^1 \right \|_2$, $\mathbf{x}_m^i=\mathbf{x}_0,\ i=1,2,\dots,l$, $\mathbf{v}_1=\mathbf{r}_0^1/\beta_m^i.$
   \STATE $\bf for$ $j=1:maxit$
      \STATE~~~ [$V_m$,\ $\bar{H}_m^1$,\ $\beta$]=Arnoldi($(A+\alpha_1I)$,\ $\mathbf{v}_1$,\ m), \  $\beta$ = $\left \| \mathbf{r}_0^1 \right \|_2$.
    \STATE   ~~~$\mathbf{y}_m^1=\mathop{\arg\min}\limits_{y\in \mathbb{R}^{n}}\left \| (\beta \mathbf{e}_1-\bar{H}_m^1\mathbf{y}) \right \|_2$;
  \STATE  ~~~$\mathbf{x}_m^1=\mathbf{x}_m^1+V_m\mathbf{y}_m^1$,~~~ $\mathbf{r}_m^1=b-A\mathbf{x}_m^1-\sigma_1\mathbf{x}_m^1$,~~~ $\mathbf{z}_{m+1}=\beta \mathbf{e}_1-\bar{H}_m\mathbf{y}_m^1$, \ $\mathbf{r}_0^1=\mathbf{r}_m^1$;
     \STATE ~~~$\bf for$ $i=2:\ell$
     \STATE   ~~~~~~Solve $\begin{bmatrix}
 \widehat{H}_m^i &\mathbf{z}_{m+1}
\end{bmatrix}   \begin{bmatrix}
 \mathbf{y}_m^i\\
\beta_m^i
\end{bmatrix}=\beta_0^i\beta \mathbf{e}_1$, where $\hat{H}^i_m=\bar{H}_m+(\alpha_i-\alpha_1)\bar{I}.$
  \STATE  ~~~~~~$\mathbf{x}_m^i=\mathbf{x}_m^i+V_m\mathbf{y}_m^i$,~~~ $\mathbf{r}_m^i=b-A\mathbf{x}_m^i-\alpha_i\mathbf{x}_m^i$,~~~ $\beta_0^i=\beta_m^i$, $\mathbf{v}_1=\mathbf{r}_0^1/\beta_m^i$.
   \STATE ~~~$\bf end~for$
  \STATE    ~~~$\bf if$ $\left \| \mathbf{r}_i \right \|_2<tol$ for $i=1,2,\dots\ \ell$.
  \STATE    ~~~~~~break
  \STATE    ~~~${\bf end~if}$
  \STATE $\bf end~for$
\end{algorithmic}
\end{algorithm}
 
 In order to deal with this problem, Frommer and Glassner \cite{17} suggested forcing the residual $\mathbf{r}_m^i$ to be collinear to $\mathbf{r}_m^1$.  More precisely, assume that the residuals satisfy $\mathbf{r}_0^i=\beta_0^i\mathbf{r}_0^1\ (i=2,3,\dots,\ell)$, where $\mathbf{r}_0^1$ is the residual corresponding to the initial guess $\mathbf{x}_0^1$ of the seed system. Note that this relation can be satisfied with all $\beta_0^i=1$ for all $i$ if we take $\mathbf{x}_0^i=0$. Let the approximate solutions of the additional systems be $\mathbf{x}_m^i=\mathbf{x}_0^i+V_m\mathbf{y}_m^i\ (i=2,3,\dots,\ell)$. Then we obtain$$\mathbf{r}_0^i-(A+\alpha_iI)V_m\mathbf{y}_m^i=\beta^i_m (\mathbf{r}_0^1-AV_m\mathbf{y}_m^1),\qquad i=2,3,\dots,\ell.$$
 Denote $\beta=\left \| \mathbf{r}_0^1 \right \|_2$ and $\mathbf{z}_{m+1}=\beta \mathbf{e}_1-\bar{H}_m\mathbf{y}_m^1$, then $\mathbf{r}_{m}^1=V_{m+1}\mathbf{z}_{m+1}$ is known and the unknowns $\mathbf{y}_m^i$, $\beta_m^i$ can be calculated by solving the following linear system \cite{17} 
 $$\begin{bmatrix}
  \hat{H}_m^i&\mathbf{z}_{m+1}
\end{bmatrix} \begin{bmatrix}
 \mathbf{y}_m^i\\\beta_m^i
\end{bmatrix}=\beta_0^i\beta \mathbf{e}_1,\qquad i=2,3,\dots,\ell.$$Let $\bar{I}=\begin{bmatrix}
 I &0
\end{bmatrix}^\top$, we have $\hat{H}^i_m=\bar{H}_m+(\alpha_i-\alpha_1)\bar{I}$.
However, $\mathbf{z}_{m+1}$ will approach zero as the seed system converges. The final algorithm can be written as Algorithm \ref{A11}.

\subsubsection{A Shifted FOM algorithm}
In \cite{4}, Valeria Simoncini propose a shifted FOM algorithm for shifted linear system. The key idea is to run the FOM algorithm \cite{17} on the seed system and then generate approximate solutions to the additional systems imposing collinearity with the computed seed residual. Theoretical analysis shows that when all systems share the same initial residual $\mathbf{r}_0$ and the same basis $V_m$, their residuals $\mathbf{r}_m^i$ remain mutually collinear after one restart cycle. As a result, the Krylov subspaces satisfy $\mathcal{K}_m(A, \mathbf{b}) = \mathcal{K}_m(\hat{A}_i, \mathbf{b})$ for $i = 1, 2, \dots, l$. Accordingly, the approximate solution for each shifted system can be expressed as $\mathbf{x}_m^i = \mathbf{x}_0^i + V_m \mathbf{y}_m^i$.

Restarting can be employed in the shifted case, the key fact is that the FOM residual $\mathbf{r}_m$  is a multiple of the basis vector $\mathbf{v}_{m+1}$  that is \cite{17} $$\mathbf{r}_m=-h_{m+1,m}\mathbf{v}_{m+1}(\mathbf{y}_m)_{(m)}.$$
In the last expression, $(\mathbf{y}_m^i)_{(m)}$  is the $m$-th component of the vector. Then \cite{4} shows that collinearity 
$$\mathbf{r}_m^i=-h_{m+1,m}\mathbf{v}_{m+1}(\mathbf{y}_m^i)_{(m)}, \ i=1,2,\dots,\ell$$
still holds in the shifted case when FOM is applied. This indicates that all residuals of the shifted systems are collinear with the $(m+1)$-th basis vector.

The final algorithm can be written as Algorithm \ref{A22}.
\renewcommand{\thealgorithm}{2}
\begin{algorithm}
\caption{A Restarted Shifted FOM \cite{4}}
\begin{algorithmic}[1]\label{A22}
\REQUIRE 
   $A\in \mathbb{C} ^{n\times n} $, $\mathbf{b}\in \mathbb{C} ^{n\times 1} $, initial vector $\mathbf{x}_0=\mathbf{0}$; Maximum number of restarts $maxit$; the number of inner iterations $m$; a prescribed tolerance $tol$; the shifts $\alpha=\{\alpha_1,\alpha_2,\dots,\alpha_\ell\}$.
\ENSURE 
    $\mathbf{x}_m^i$ for $i=1,2,\dots,\ell$.\\
   \STATE Set $\mathbf{r}_0=\mathbf{b}$, $\beta_m^i=\left \| \mathbf{r}_0 \right \|_2$, $\mathbf{x}_m^i=\mathbf{x}_0,\ i=1,2,\dots,\ell$, and $\mathbf{v}_1=\mathbf{r}_0/\left \| \mathbf{r}_0 \right \|_2.$
   \STATE $\bf for$ $j=1:maxit$
      \STATE ~~~ [$V_m$,\ $\bar{H}_m$]=Arnoldi(A,\ $\mathbf{v}_1$,\ m),~~~ $H_m=\bar{H}_m(1:m,:).$
   \STATE ~~~$\bf for$ $i=1:l$;
    \STATE   ~~~~~~$\mathbf{y}_m^i=(H_m+\alpha_iI)^{-1}\mathbf{e}_1\left | \beta_m^i \right |$,~~~ $\mathbf{x}_m^i=\mathbf{x}_m^i+V_m\mathbf{y}_m^i$;
  \STATE   ~~~~~~$\mathbf{r}_m^i=b-A\mathbf{x}_m^i-\alpha_i\mathbf{x}_m^i$,~~~ $\beta_m^i=-h_{m+1,m}(\mathbf{y}_m^{i})_{(m)}$;
   \STATE ~~~$\bf end~for$
   \STATE ~~~$\mathbf{v}_1=\mathbf{v}_{m+1}$.
  \STATE    ~~~$\bf if$ $\left \| \mathbf{r}_i \right \|_2<tol$ for $i=1,2,\dots\ \ell$.
  \STATE    ~~~~~~break
  \STATE    ~~~${\bf end~if}$
  \STATE $\bf end~for$
\end{algorithmic}
\end{algorithm}

\subsection{Randomized Krylov Subspace Methods}
To make use of randomized sketching, we further assume that we have an operator $\Theta \in \mathbb{R}^{k\times n}$ with $m< k\ll n$, which acts as an approximate isometry for the Euclidean norm. Next, we provide some more detailed definitions(see, e.g., \cite{12,20,21,22}).
\begin{definition}\label{D2.1}
\cite{12} For $0 \le \varepsilon < 1$, the sketching matrix is said to be an $\varepsilon$-subspace embedding for $V\subset \mathbb{R}^n$ if we have$$\forall \mathbf{x}, \mathbf{y} \in V, \quad|\langle\mathbf{x}, \mathbf{y}\rangle-\langle\boldsymbol{\Theta} \mathbf{x}, \boldsymbol{\Theta} \mathbf{y}\rangle| \leq \varepsilon\|\mathbf{x}\|\|\mathbf{y}\|. $$
\end{definition}

Definition \ref{D2.1} indicates that the inner product of vectors can be approximated by the inner product after sketching.
\begin{definition}\label{D2.2}
\cite{12} The sketching matrix $\Theta \in \mathbb{R}^{k \times n}$ is called a $(\varepsilon ,\delta ,d)$ oblivious $l_2$-subspace embedding if it is an $\varepsilon$-embedding for any fixed $d$-dimensional subspace $V\subset \mathbb{R}^n$ with probability at least $1-\delta$.
\end{definition}

Definition \ref{D2.2} states that matrix $A$ can be constructed from a carefully chosen distribution with high probability, and it satisfies the condition for any low-dimensional subspace.
When $k$ is sufficiently large, many distributions satisfy the $( \varepsilon, \delta, d )$ independent $l_2$-subspace embedding property. Examples include the Gaussian distribution, Rademacher distribution, subsampled randomized Hadamard transform, and Fourier transform. Gaussian sketching matrices have elements independently drawn from a standard Gaussian normal distribution, but their drawback is relatively high computational cost and low efficiency.

Before proceeding, we first recall the randomized Arnoldi procedure (hereafter referred to as the RandArnoldi algorithm). We then review the randomized GMRES (RandGMRES) method originally introduced by Balabanov and Grigori \cite{1}. A key condition that RandGMRES must satisfy is the projection condition 
$$\Theta (\textbf{b}- A\textbf{x})\bot \Theta AV_m$$in the sketched subspace.
Consider solving the linear system $A\textbf{x} = \textbf{b}$, where $A\in \mathbb{R}^{n\times n}$, given an initial vector $\textbf{x}_0$, let $\textbf{r}_0 = \textbf{b} - A \textbf{x}_0$. Our objective is to solve the optimization problem:
\begin{equation}\label{2.1}
x=\mathop{\arg\min}\limits_{x\in \mathcal{K}_{m}}\left \| \Theta (A\textbf{x}-\textbf{b}) \right \|_2,
\end{equation}
where $\Theta \in \mathbb{R}^{k \times n}$ is a sketching matrix, starting with $\textbf{v}_1 = \textbf{r}_0/\left \| \Theta \textbf{r}_0 \right \|$, we compute $\textbf{w} = A\textbf{v}_1$ and $\textbf{s}_1=\Theta \textbf{v}_1$, the vector $\Theta \mathbf{w}$ is then orthogonalized against $\textbf{s}_1$ to form $\textbf{s}_2$. Given that the iteration has proceeded to step $m$ with available vectors $\{\textbf{v}_1,\textbf{v}_2,\cdots,\textbf{v}_{m}\}$ and $\{\textbf{s}_1,\textbf{s}_2,\dots,\textbf{s}_{m}\}$, we compute 
\begin{equation}\label{2.2}
\textbf{v}_{m+1}= \frac{\textbf{w}-\sum_{i=1}^{m} (\textbf{s}_i^\top \Theta \textbf{w})\textbf{v}_i}{\left \| \Theta \textbf{w}-\sum_{i=1}^{m}(\textbf{s}_i^\top \Theta \textbf{w})\textbf{s}_i \right \|_2 },
\end{equation}
$$A\textbf{v}_{m}= \sum_{i=1}^{m}(\textbf{s}_i^\top \Theta \textbf{w})\textbf{v}_i +\left \| \Theta \textbf{w}-\sum_{i=1}^{m}\textbf{s}_i(\textbf{s}_i^\top \Theta \textbf{w}) \right \|_2\textbf{v}_{m+1}.$$
We denote the coefficients preceding $\{\textbf{v}_1,\textbf{v}_2,\dots,\textbf{v}_{m}\}$ as the first $m$ elements of the $m$-th column of upper Hessenberg matrix $\bar{H}$, record the coefficient preceding $\mathbf{v}_{m+1}$ as $h_{m+1,m}$. Under this construction, the Arnoldi relation $AV_m=V_{m+1}\bar{H}$ continues to hold.

From
\begin{equation}\label{2.3}
\mathbf{s}_{m+1} = \frac{\Theta \mathbf{w}-\sum_{i=1}^{m}(\mathbf{s}_i^\top \Theta \mathbf{w})\mathbf{s}_i}{\left \| \Theta \mathbf{w}-\sum_{i=1}^{m}(\mathbf{s}_i^\top \Theta \mathbf{w})\textbf{s}_i \right \|_2 },
\end{equation}
$$\Theta A\mathbf{v}_{m} = \sum_{i=1}^{m}(\mathbf{s}_i^\top \Theta \mathbf{w})\mathbf{s}_i+\left \| \Theta \mathbf{w}-\sum_{i=1}^{m}\mathbf{s}_i(\mathbf{s}_i^\top \Theta \mathbf{w}) \right \|_2\mathbf{s}_m.$$
We observe that the vectors $\mathbf{s}_i$ and $\mathbf{v}_i$ are generated using the same coefficients from the Hessenberg matrix $\bar{H}$, which does not change during the process. As a result, the sketched Arnoldi relation $\Theta A V_m = \Theta V_{m+1} \bar{H}$ holds and the matrix $\Theta V_m$ is column-orthogonal. While Balabanov and Grigori note that computing $\mathbf{s}_i=\Theta \mathbf{v}_i$ provides better stability, our experiments show that when the number of RandArnoldi steps is small, the computational cost of alternative scheme \eqref{2.3} is lower. This makes the method especially suitable for restarted scenarios with many outer loops and few inner iterations, where the reduced cost per restart cycle leads to significant total savings.

In practical use, the basis vectors generated by the RandArnoldi process tend to lose orthogonality. To reduce this numerical problem, reorthogonalization can be used, leading to the stabilized procedure described in Algorithm \ref{A1}. The recurrence relations in \eqref{2.2} and \eqref{2.3} together define the RandArnoldi algorithm. The choice to perform reorthogonalization is based on the ratio of $\left \|\Theta \mathbf{w}  \right \|_2$ to the norm of the vector after orthogonalization, $\left \| \Theta \mathbf{w}-\sum_{1}^{m-1}\mathbf{s}_i(\mathbf{s}_i^\top \Theta \mathbf{w}) \right \|_2$, as defined in \eqref{2.3}. A threshold of $\tau$ is adopted; if the ratio is lower than this value, the operations in \eqref{2.2} and \eqref{2.3} are repeated. After that, the computed value of $(SV_{m-1})^\top \mathbf{s}_m$ is added to the corresponding column of the Hessenberg matrix $\bar{H}$.
 
From \eqref{2.1}, it follows that the least squares problem solved by RandGMRES is 
$$\mathbf{y}=\mathop{\arg\min}\limits_{\mathbf{y}\in \mathbb{C}^{m}} \left \| \Theta V_{m+1}(\bar{H}\mathbf{y}-\left \| \Theta \mathbf{r}_0 \right \|\mathbf{e}_1 ) \right \|_2.$$
and the final solution of RandGMRES is $\mathbf{x}=\mathbf{x}_0+V_m\mathbf{y}.$

In theorem \ref{T4.1}, we establish connections between the solutions of two currently popular algorithms employing sketching techniques sGMRES \cite{7} and RandGMRES \cite{1}, and the basis vectors of these two methods differ only by an upper triangular matrix. sGMRES uses the truncated Arnoldi method during the orthogonalization process, but the functions optimized by both methods are the same, these two methods are equivalent.
\begin{theorem}\label{T4.1}
The solution space generated by $V$ in RandGMRES is the same as that generated by $U$ in sGMRES. Furthermore, the two bases are related through the transformation$$U=VR,$$ where $R$ is the matrix obtained by reversing the signs of specific columns in the upper triangular factor from the QR decomposition of $\Theta U$.
\end{theorem}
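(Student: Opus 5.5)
Both $V_m$ (from RandArnoldi) and $U_m$ (from the truncated Arnoldi process in sGMRES) are bases of the same Krylov subspace $\mathcal{K}_m(A,\mathbf{r}_0)$. I will therefore look for a triangular change of basis and identify it through the sketched orthonormality of $\Theta V_m$.

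\emph{Step 1: nested Krylov bases.} I first show by induction on $j$ that $\operatorname{span}\{\mathbf{v}_1,\dots,\mathbf{v}_j\}=\mathcal{K}_j(A,\mathbf{r}_0)$. The base case is $\mathbf{v}_1=\mathbf{r}_0/\|\Theta\mathbf{r}_0\|_2$. For the inductive step, \eqref{2.2} gives $\mathbf{v}_{j+1}$ as $A\mathbf{v}_j$ minus a combination of $\mathbf{v}_1,\dots,\mathbf{v}_j$, divided by $h_{j+1,j}\neq 0$. The same induction applies to sGMRES: truncated Arnoldi also sets $\mathbf{u}_{j+1}$ proportional to $A\mathbf{u}_j$ minus a combination of the previous $k$ vectors, with a nonzero normalizing factor. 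Hence both nested families span $\mathcal{K}_j$ for every $j\le m$, assuming no breakdown and that $U_m$ has full column rank. It follows that $U_m=V_mR$ for a unique nonsingular upper triangular $R$, because $\mathbf{u}_j\in\mathcal{K}_j=\operatorname{span}\{\mathbf{v}_1,\dots,\mathbf{v}_j\}$ and the $\mathbf{v}_i$ are independent. This proves the first claim, that the two solution spaces coincide.

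\emph{Step 2: identifying $R$.} Apply $\Theta$ to get $\Theta U_m=(\Theta V_m)R$. By the discussion after \eqref{2.3}, $\Theta V_m$ has orthonormal columns, so this is a thin QR factorization of $\Theta U_m$. Since $\Theta$ is an $\varepsilon$-embedding for $\mathcal{K}_m$, $\Theta U_m$ has full column rank, and thin QR is unique up to a diagonal unitary (a $\pm1$ sign matrix in the real case). So if $\Theta U_m=QR_0$ is a computed QR factorization, then $\Theta V_m=QD$ and $R=DR_0$ with $D=\operatorname{diag}(\pm1)$. The diagonal of $R$ must agree in sign with the positive normalizations used in \eqref{2.2}, namely $r_{jj}=\langle\Theta\mathbf{v}_j,\Theta\mathbf{u}_j\rangle$. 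Hence $R$ is $R_0$ with the signs of the rows corresponding to negative $D$ entries flipped; equivalently, $D$ acts on the columns of $Q$.

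\emph{Step 3: equivalence of the optimizers.} Both methods minimize $\|\Theta(\mathbf{b}-A\mathbf{x})\|_2$ over $\mathbf{x}\in\mathbf{x}_0+\mathcal{K}_m$. Since the affine spaces are equal, their minimizers coincide; via $\mathbf{y}_U=R^{-1}\mathbf{y}_V$ the iterates are identical.

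The main obstacle is the sign and normalization bookkeeping in Step 2. The statement speaks of ``reversing the signs of specific columns,'' while uniqueness of QR naturally gives a row scaling $D R_0$. I would resolve this by writing $\Theta V_m=Q D$ explicitly. The sign choice is then determined by requiring $\langle\Theta\mathbf{v}_j,\Theta\mathbf{u}_j\rangle>0$ as produced by the RandArnoldi normalization, and I would phrase the column or row flip accordingly.
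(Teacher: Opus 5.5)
Your proposal is correct and is essentially the paper's argument, run in the opposite direction. Both rest on the same ingredients: the common Krylov subspace, the column-orthonormality of $\Theta V$, and uniqueness of the thin QR factorization of $\Theta U$ up to a $\pm1$ diagonal. You obtain the upper triangular $R$ with $U=VR$ directly from the nested spans $\operatorname{span}\{\mathbf{v}_1,\dots,\mathbf{v}_j\}=\operatorname{span}\{\mathbf{u}_1,\dots,\mathbf{u}_j\}=\mathcal{K}_j$ and then apply $\Theta$. The paper instead builds $\Theta U=\Theta V R$ inductively in the sketched space and lifts it to $U=VR$ by uniqueness of coefficients.

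Your row-versus-column remark is also right. The paper's own proof arrives at $\widetilde{R}=\operatorname{diag}(\pm1)R$, which flips rows of $\widetilde R$ (equivalently, columns of $\widetilde Q$), so ``columns'' in the statement should be read in that sense.
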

\begin{proof}
The vector $\mathbf{w}_m^R = A\mathbf{v}_{m-1}$ in RandGMRES \cite{1} and $\mathbf{w}_m^S = A\mathbf{u}_{m-1}$ in sGMRES \cite{7} effectively introduce the same search direction $A^{m-1}\mathbf{r}_0$ into their respective iterative processes. As a result, the solution spaces of both methods remain confined to the same Krylov subspace $\mathcal{K}_m(A,\mathbf{r}_0)$. Based on $span(V)=span(U)$, the result for $$span( \Theta V)=span( \Theta U)$$ can be correspondingly derived.

Since $\mathbf{v}_1=\frac{\mathbf{r}_0}{\left \| \Theta \mathbf{r}_0 \right \|_2}$ and $\mathbf{u}_1=\frac{\mathbf{r}_0}{\left \| \mathbf{r}_0 \right \|_2 }$, we have $\Theta \mathbf{u}_1=Q_1\widehat{R}_1$, where $Q_1=\Theta \mathbf{v}_1$, we now proceed by mathematical induction, suppose that after $m-1$ iterations of the RandGMRES and sGMRES algorithms, the resulting basis vectors are $$V_{m-1}=\begin{bmatrix}
  \mathbf{v}_1&\mathbf{v}_2  &\dots  &\mathbf{v}_{m-1}
\end{bmatrix} \qquad and \qquad U_{m-1}=\begin{bmatrix}
  \mathbf{u}_1&\mathbf{u}_2  &\dots  &\mathbf{u}_{m-1}
\end{bmatrix}$$ respectively, 
and the relation 
$\Theta U_{m-1}=\Theta V_{m-1}\widehat{R}_{m-1}$ holds, in which $\widehat{R}_{m-1}$ is an upper triangular matrix.
If the orthonormal basis $\{
  \Theta \mathbf{v}_1,\Theta \mathbf{v}_2  ,\dots  ,\Theta \mathbf{v}_{m-1}
\}$ is extended to an orthonormal basis $$\{
  \Theta \mathbf{v}_1 ,\dots, \Theta \mathbf{v}_{m-1} ,\Theta \mathbf{v}_{m}
\}$$ of the subspace spanned by $\{
  \Theta \mathbf{v}_1 ,\dots, \Theta \mathbf{v}_{m-1} ,\Theta \mathbf{w}_m
\}$, and the vector $\mathbf{w}_m=A^{m-1}r_0$ represents the basis vector generated at the $m$-th iteration, which is common to both the RandGMRES and sGMRES algorithms. Then the new basis vector $\Theta \mathbf{v}_{m}$ exists and is uniquely determined up to sign, and $\mathbf{a}$ and $b$ are the orthogonalization coefficients of $\Theta w$ relative to $\Theta V_{m-1}$.
The result of the m-th iteration has the following relationship
$$\Theta U_m=\begin{pmatrix}
  \Theta V_{m-1}&\Theta v_{m}
\end{pmatrix}\begin{pmatrix}
  \widehat{R}_{m-1}&\mathbf{a} \\
  0&b
\end{pmatrix}.$$
According to mathematical induction, the following relationship holds for $U$ and $V$ of arbitrary dimensions
\begin{equation}\label{111111}
  \Theta U=\Theta V
diag(\pm 1, \pm1, \dots, \pm1)
diag(\pm 1, \pm, \dots, \pm1)
R.
\end{equation}
Suppose the QR decomposition of $\Theta U$ is $\Theta U=\widetilde{Q} \widetilde{R}$, it follows from \eqref{111111} that
$$\widetilde{Q}=\Theta V diag(\pm 1, \pm, \dots, \pm1) \qquad and \qquad \widetilde{R}= diag(\pm 1, \pm, \dots, \pm1)R.$$
However, we know there must exist a matrix $\hat{R}$ such that $U = V \hat{R}$. This matrix $\hat{R}$ must satisfy the sketched subspace relation $\Theta U=\Theta V\hat{R}$. Since $\Theta V$ is column-orthogonal, this system admits a unique solution, yielding the explicit relation $\hat{R}=R$.
\end{proof}

To ensure the orthogonality of $\Theta V_{m+1}$, we present Algorithm \ref{A1}. This is basically consistent with the re-orthogonalization method proposed in \cite{41}
\renewcommand{\thealgorithm}{3}
\begin{algorithm}
\caption{A RandArnoldi with Reorthogonalization \cite{41}}
\begin{algorithmic}[1]\label{A1}
\REQUIRE 
   Matrix $A\in \mathbb{C} ^{n\times n}$, non-zero vector $\mathbf{r}_0\in \mathbb{C}^{n \times 1}$, sketching matrix $\Theta \in \mathbb{C}^{s \times n}$, the number of iterations $m$;
\ENSURE 
    $\Theta V_{m+1}\in \mathbb{C}^{s \times m}, V \in \mathbb{C}^{n \times m}$, $\bar{H} \in \mathbb{C}^{(m+1) \times m}$.\\
   \STATE $SV(:,1) = \Theta \mathbf{r}/ \left \| \Theta \mathbf{r} \right \|_2;$
   \STATE $V(:,1) = \mathbf{r}/ \left \| \Theta \mathbf{r} \right \|_2;$ 
   \STATE $\bf for$ $i=2:m+1$
    \STATE   ~~~$\mathbf{w}=A V(:, i-1),~~~ \mathbf{s w}=\Theta \mathbf{w},~~~ n o r m w=\|\mathbf{s w}\|_2$;
  \STATE  ~~~$\bar{H}(1: i-1, i-1)=(SV)^{\top}(\mathbf{s w})$;
  \STATE   ~~~$\mathbf{s w}=\mathbf{s w}-SV \bar{H}(1: i-1, i-1),~~~ \mathbf{w}=\mathbf{w}-V \bar{H}(1: i-1, i-1);$
  \STATE    ~~~$\bf if$  $\left \|  \mathbf{sw} \right \|/normw< \tau$
  \STATE    ~~~~~~$\bar{H}(1: i-1, i-1)=\bar{H}(1: i-1, i-1)+(SV)^{\top}(\mathbf{s w}),$
  \STATE    ~~~~~~$\mathbf{s w}=\mathbf{s w}-SV \bar{H}(1: i-1, i-1) ,~~~ \mathbf{w}=\mathbf{w}-V \bar{H}(1: i-1, i-1);$
  \STATE    ~~~${\bf end~if}$
  \STATE   ~~~$ \bar{H}(i,i-1)=\left \| \mathbf{sw} \right \|_2;$
  \STATE   ~~~$SV=[SV \ \mathbf{sw}]$, $V=[V \ \mathbf{w}];$
  \STATE    ~~~$\bf if$ $\bar{H}(i,i-1)=0$
  \STATE    ~~~~~~stop
  \STATE    ~~~${\bf end~if}$
  \STATE $\bf end~for$
\end{algorithmic}
\end{algorithm}

This section analyzes the difference in floating-point operation counts between Balabanov and Grigori's scheme $\mathbf{s}_i=\Theta \mathbf{v}_i$ and the alternative scheme \eqref{2.3}. When implemented using the Fast Walsh-Hadamard Transform, the former requires only $\mathcal{O}(2n\log_{2}{(k+1)})$ flops \cite{1}. In contrast, at the $m$-th  step, scheme \eqref{2.3} involves a linear combination of $m$ vectors of dimension $k$ resulting in $(2m+1)k$ flops. Our analysis demonstrates that for relatively small values of $m$, scheme \eqref{2.3} is computationally faster than the $\mathbf{s}_i=\Theta \mathbf{v}_i$ approach.

Next, we will discuss the difference in floating-point operation counts when both the Arnoldi process and the RandArnoldi process are run $m$ times.

1. The RArnoldi process performs sketching on m column vectors, whereas the Arnoldi process does not. We estimate its floating-point operation count to be $\mathcal{O}(2mn\log_{2}{(k+1)})$.

2. In the RandArnoldi process, the Hessenberg matrix $\bar{H}$ is obtained by computing inner products of the sketched column vectors, and we estimate its floating-point operation count to be $\frac{(2k-1)(m+1)m}{2}$. In contrast, in the Arnoldi process, the Hessenberg matrix is derived from inner products of the original (unsketched) column vectors, and we estimate its floating-point operation count to be $\frac{(2n-1)(m+1)m}{2}$.

3. The RandArnoldi process requires the computation of \eqref{2.3}. We estimate its floating-point operation count to be $m(m+2)k$.
\begin{remark}
We can estimate that the difference in floating-point operation \\counts between the two methods is $$(n-k)(m+1)m-m(m+2)k-\mathcal{O}(2mn\log_{2}{(k+1)}).$$ There is a relatively small integer $a$ such that for $m > a$, the RandArnoldi process is faster than the standard Arnoldi process. The performance gap between them becomes larger as $m$ increases.
\end{remark}

Then \eqref{2.1} can be transformed into $$\mathbf{y}=\mathop{\arg\min}\limits_{\mathbf{y}\in \mathbb{C}^{n}}\left \| \bar{H}\mathbf{y}-\left \| \Theta \mathbf{r}_0 \right \|\mathbf{e}_1  \right \|_2.  $$

Finally, the transformed system can be solved via QR factorization-computed using either Givens or Householder transformations. In this setting, the RandGMRES solution, expressed as $\mathbf{x}=\mathbf{x}_0+V_m\mathbf{y} $, yields a residual norm for the original system that closely approximates that produced by the GMRES method.
\section{Randomized Krylov Subspace Methods for Solving Shifted Linear Systems}\label{sec3}
This section presents two restarted Krylov subspace algorithms for solving shifted linear systems. The proposed framework improves both the GMRES-Sh and FOM-Sh methods by incorporating randomized sketching techniques, with its core methodology consisting of two key components:(1) Applying the RandGMRES iterations exclusively to the seed system. (2) Enforcing collinearity between the residual of the additional system and that of the seed system, which is achieved by projecting the additional system's solution update onto the same subspace. This ensures that the entire family of shifted systems is solved efficiently within a unified framework.
\subsection{A Restarted RandGMRES for shifted linear system}
Here, assume that the seed system is set as follows:$$(A+\alpha_1)\mathbf{x}_1=\mathbf{b}.$$
Suppose there are a total of $(\ell-1)$ additional systems and set up the additional systems as follows:$$(A+\alpha_iI)\mathbf{x}_i=\mathbf{b},\ i=2,3,\dots,\ell.$$
For the Randomized GMRES algorithm of  Balabanov and Grigori, we analyze its residual:
\begin{equation}\label{aa}
  \mathbf{r}_m^1=V_{m+1}(\left \| \Theta \mathbf{r}_0^1 \right \|_2\mathbf{e}_1-\bar{H}_m^1\mathbf{y}^1_m ).
\end{equation}
Here, we denote $\left \| \Theta \mathbf{r}_0 \right \|_2\mathbf{e}_1-\bar{H}_m^1\mathbf{y}_m$ as $\mathbf{z}_{m+1}$.
Since the Arnoldi relation $AV_m=V_{m+1}\bar{H}$ holds, the Arnoldi relation for the displacement system is \begin{equation}\label{ab}
(A+\alpha_i I)V_m=V_{m+1} \hat{H}_m^i
\end{equation} still holds, denote it as $\bar{\alpha_i}=\alpha_i-\alpha_1$ and $\hat{H}_m^i=H_m^1+\bar{\alpha}_i
\bar{I}$
, $i=2,\dots,\ell$.

Next we consider the properties of the basis obtained from the RandArnoldi process. Theorem \ref{T2.3} shows that if the basis is orthogonal after dimensionality reduction via sketching, it must be inherently linearly independent.
\begin{theorem}\label{T2.3}
	Suppose $V_{m+1} \in \mathbb{R}^{ n \times (m+1)}$ is a set of bases with $m+1<n$, and $\Theta$ is a $\varepsilon$-subspace embedding for $V_{m+1}$. If $\Theta V_{m+1}$ is orthogonal, then $V_{m+1}$ is column linearly independent.
\end{theorem}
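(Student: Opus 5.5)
We argue by contradiction, or more directly by showing that $V_{m+1}\mathbf{c}=\mathbf{0}$ forces $\mathbf{c}=\mathbf{0}$. Here ``$\Theta V_{m+1}$ is orthogonal'' is read as: the columns of $\Theta V_{m+1}$ are orthonormal (or at least mutually orthogonal and nonzero). This is exactly what the RandArnoldi process with reorthogonalization (Algorithm \ref{A1}) produces, where $(\Theta V_{m+1})^\top \Theta V_{m+1}=I_{m+1}$.

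\textbf{Main argument.} Take any $\mathbf{c}\in\mathbb{R}^{m+1}$ with $V_{m+1}\mathbf{c}=\mathbf{0}$. Applying $\Theta$ gives $\Theta V_{m+1}\mathbf{c}=\mathbf{0}$. Multiplying on the left by $(\Theta V_{m+1})^\top$ and using orthonormality of the columns yields $\mathbf{c}=(\Theta V_{m+1})^\top\Theta V_{m+1}\mathbf{c}=\mathbf{0}$. In the merely orthogonal case, $(\Theta V_{m+1})^\top\Theta V_{m+1}=D$ is diagonal with positive entries, so again $\mathbf{c}=\mathbf{0}$. Hence the columns of $V_{m+1}$ are linearly independent. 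In rank language, $\operatorname{rank}(V_{m+1})\ge \operatorname{rank}(\Theta V_{m+1})=m+1$. This step uses only linearity of $\Theta$, not the embedding property.

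\textbf{Role of the embedding hypothesis.} We also record what the $\varepsilon$-subspace embedding (Definition \ref{D2.1}) adds, since the paper includes it, presumably for conditioning. For $\mathbf{x}=V_{m+1}\mathbf{c}$ in the column space, the definition with $\mathbf{y}=\mathbf{x}$ gives $(1-\varepsilon)\|\mathbf{x}\|^2\le\|\Theta\mathbf{x}\|^2=\|\mathbf{c}\|^2\le(1+\varepsilon)\|\mathbf{x}\|^2$. Therefore $\|V_{m+1}\mathbf{c}\|\ge (1+\varepsilon)^{-1/2}\|\mathbf{c}\|>0$ for $\mathbf{c}\neq\mathbf{0}$, which reproves independence. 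It also bounds the singular values of $V_{m+1}$ in $[(1+\varepsilon)^{-1/2},(1-\varepsilon)^{-1/2}]$, a quantitative version worth stating as a remark.

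\textbf{Main obstacle.} There is no real technical difficulty; the only care needed is in interpreting ``orthogonal.'' Nonzero columns are required, and these are guaranteed because the algorithm normalizes them, or because $h_{i,i-1}\neq 0$ before breakdown. With that settled, the proof is immediate.
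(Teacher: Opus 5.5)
Your proof is correct, and it follows a different route from the paper's. The paper argues column by column and uses the embedding hypothesis essentially. For any $\mathbf{v}$ in the span of the columns other than $\mathbf{v}_i$, linearity gives $\Theta\mathbf{v}_i\perp\Theta\mathbf{v}$. Definition~\ref{D2.1} then yields $|\langle\mathbf{v}_i,\mathbf{v}\rangle|\le\varepsilon\|\mathbf{v}_i\|\|\mathbf{v}\|$, i.e.\ $|\cos\theta|\le\varepsilon<1$, so $\mathbf{v}_i$ cannot lie in that span.

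You instead observe that $\operatorname{rank}(V_{m+1})\ge\operatorname{rank}(\Theta V_{m+1})=m+1$. This shows the qualitative conclusion needs neither the embedding nor $m+1<n$, only that $\Theta V_{m+1}$ has nonzero, mutually orthogonal columns. In fact the paper's argument also closes without the embedding: if $\mathbf{v}_i$ were in the span, one could take $\mathbf{v}=\mathbf{v}_i$, forcing $\Theta\mathbf{v}_i\perp\Theta\mathbf{v}_i$ and hence $\Theta\mathbf{v}_i=\mathbf{0}$.

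What the paper's route buys is a quantitative angle statement read directly off the definition: every column makes an angle of at least $\arccos\varepsilon$ with the span of the others, although the paper only uses this qualitatively. Your remark gives the more standard global form. When the columns of $\Theta V_{m+1}$ are orthonormal, as produced by Algorithm~\ref{A1}, you get $\sigma_{\min}(V_{m+1})\ge(1+\varepsilon)^{-1/2}$ and $\sigma_{\max}(V_{m+1})\le(1-\varepsilon)^{-1/2}$. Hence $\kappa(V_{m+1})^2\le(1+\varepsilon)/(1-\varepsilon)$, which is equivalent to $(\kappa^2-1)/(\kappa^2+1)\le\varepsilon$. Wielandt's inequality applied to $V_{m+1}^\top V_{m+1}$ then says that for orthogonal $\mathbf{a},\mathbf{b}$ one has $|\langle V_{m+1}\mathbf{a},V_{m+1}\mathbf{b}\rangle|\le\varepsilon\|V_{m+1}\mathbf{a}\|\|V_{m+1}\mathbf{b}\|$. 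Taking $\mathbf{a}=\mathbf{e}_i$ and $\mathbf{b}$ with vanishing $i$-th entry recovers exactly the paper's cosine bound, so your quantitative version subsumes it.
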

\begin{proof}
	An $m+1$ dimensional matrix $V_{m+1}=(\mathbf{v}_1,\dots,\mathbf{v}_{i-1},\mathbf{v}_i,\mathbf{v}_{i+1},\dots,\mathbf{v}_{m+1})$, for $\forall \mathbf{v} \in span\{\mathbf{v}_1,\dots,\mathbf{v}_{i-1},\mathbf{v}_{i+1},\dots,\mathbf{v}_{m+1}\}$, we have that $\Theta \mathbf{v}_i$ is orthogonal to $\Theta \mathbf{v}$. According to definition \ref{D2.1}, we have
	$$\left \| \left \langle \mathbf{v}_i,\mathbf{v} \right \rangle-\left \langle \Theta \mathbf{v}_i,\Theta \mathbf{v} \right \rangle   \right \| \le \varepsilon \left \| \mathbf{v}_i \right \|\cdot \left \| \mathbf{v} \right \|.$$
Since $\Theta \mathbf{v}$ and $\Theta \mathbf{v}_i$ are orthogonal to each other
	$$\left \| \left \langle \mathbf{v}_i,\mathbf{v} \right \rangle   \right \| \le \varepsilon \left \| \mathbf{v}_i \right \|\cdot \left \| \mathbf{v} \right \|.$$  
Based on the geometric properties inherent in the inner product of vectors
	$$\left \| \mathbf{v}_i \right \|\cdot \left \| \mathbf{v} \right \|\left | \cos \theta  \right |  \le \varepsilon \left \| \mathbf{v}_i \right \|\cdot \left \| \mathbf{v} \right \|. $$   
Given that $\left \| \mathbf{v} \right \| >0 $ and $\left \| \mathbf{v_i} \right \|>0$, we are allowed to subtract $\left \| \mathbf{v} \right \|,\left \| \mathbf{v_i} \right \|$ from both sides of the inequality
	$$\left | \cos \theta  \right | \le \varepsilon<1.$$
Then $\mathbf{v}_i$ cannot be linearly represented by$\{\mathbf{v}_1,\dots,\mathbf{v}_{i-1},\mathbf{v}_{i+1},\dots,\mathbf{v}_{m+1}\}$.
\end{proof}

Since $\Theta V_{m+1}$ is column-orthogonal, we may consider setting $\Theta \mathbf{r}_{m}^{i}=\beta_m^{i} \Theta \mathbf{r}_m^{1}$; however, $\Theta \mathbf{r}_{m}^{i}=\beta_m^{i} \Theta \mathbf{r}_m^{1}$ is not equivalent to $\mathbf{r}_m^{i}=\beta_m^{i}\mathbf{r}_m^{1}$. This directly affects the construction of the solution space in subsequent iterations. 

Next we consider how to force the collinearity of the residual directions of the forcing addition system and the seed system after reatarting. The key idea is to set
$$\mathbf{r}_m^{i}=\beta_m^{i}\mathbf{r}_m^{1}, \qquad i=2,3,\dots,\ell, $$
where $\mathbf{r}_m^{i}$ denotes the residual of the $i$-th additional system at the $m$-th iteration, and $\beta_m^{i}$ denotes the multiple of the seed system residual by the residual of the $i$-th additional system at the $m$-th iteration. We assume the initial value of $\beta_0^{i}$ is 1, and assign $\beta_m^{i}$ to $\beta_0^{i}$ after each iteration, Notice that
\begin{equation}\label{aa1}
  b-(A+\alpha_i I)\mathbf{x}_{m-1}^{i}-(A+\alpha_i I)V_m\mathbf{y}_m^{i}=\beta_m^{i}(\mathbf{b}-(A+\alpha_1 I)\mathbf{x}_{m-1}^{1}-(A+\alpha_1 I)V_m\mathbf{y}_m^{1}).
\end{equation}
It follows from \eqref{ab} and \eqref{aa1} that
\begin{equation}\label{aa2}
  \beta_0^{i}\mathbf{r}_0^{1}-V_{m+1}\hat{H}_m^i\mathbf{y}_m^{i}=\beta_m^{i}(\mathbf{r}_0^{1}-V_{m+1}\bar{H}_m^1\mathbf{y}_m^{1}).
\end{equation}
Since $\mathbf{r}_0^{1}$ can be expressed in terms of $V_{m+1}$, the variable $V_{m+1}$ can be factored out from both sides of the equation. It follows from \eqref{aa} and \eqref{aa2} that
\begin{equation}\label{bb}
  V_{m+1}(\beta_0^{i} \\ \left \| \Theta \mathbf{r}_0^{1} \right\|_2\mathbf{e}_1-\hat{H}_m^i\mathbf{y}_m^{i})    =\beta_m^{i}V_{m+1}\mathbf{z}_{m+1}.
\end{equation}
According to Theorem \ref{T2.3}, it is evident that the solution to problem \eqref{bb} is also the solution to problem $$\hat{H}_m^i\mathbf{y}_m^{i} \\ +\beta_m^{i}\mathbf{z}_{m+1}=\beta_0^{i}\left \| \Theta \mathbf{r}_0^{1} \right \|_2\mathbf{e}_1.$$The RGMRES-DR method is detailed in Algorithm \ref{A2}
\renewcommand{\thealgorithm}{4}
\begin{algorithm}
\caption{A Restarted shifted RandGMRES}
\label{A2}
\begin{algorithmic}[1]
\REQUIRE 
    $A\in \mathbb{C} ^{n\times n} $, $\mathbf{b}\in \mathbb{C} ^{n\times 1} $, sketching matrix $\Theta \in \mathbb{C}^{s \times n}$; Maximum number of restarts $maxit$; the number of inner iterations $m$; a prescribed tolerance $tol$; the shifts $\alpha=\{\alpha_1,\alpha_2,\dots,\alpha_\ell\}$.
\ENSURE 
    $\mathbf{x}_m^i, \mathbf{r}_m^i$.\\
   \STATE Set  $\mathbf{r}_0^i=\mathbf{b}$; $\mathbf{x}_0^i= zeros(n,1)$; $\beta_0^i= 1$; $i=1,2,\dots,\ell$.
   \STATE Select the system $A+\alpha_pI,\ p=1$ as the seed system;
   \STATE $\bf for$ $j=1:maxit$
   \STATE   ~~~$[\Theta V_{m+1}, V_{m+1},\ \bar{H}_m^p]=RandArnoldi(A+\alpha_pI,\ \mathbf{r}_0^p,\ \Theta,\ m)$;
   \STATE   ~~~$\mathbf{y}_m^p= \bar{H}_m^p \setminus \left \| \Theta \mathbf{r}_0^p \right \| \mathbf{e}_1$,~~~$\mathbf{x}_m^p=\mathbf{x}_0^p+V_{m}\mathbf{y}_m^p$,~~~$\mathbf{z}_m=\left \| \Theta \mathbf{r}_0^p \right \| \mathbf{e}_1-\bar{H}_m^p\mathbf{y}_m^p$,~~~$\beta_0^p=1.$
  \STATE   ~~~${\bf for}$ $\alpha_i \subset \alpha,\ i \ne p$;
  \STATE    ~~~~~~$\hat{H}_m^i= \bar{H}_m^p+(\alpha_i-\alpha_p)\begin{bmatrix}
 I &0
\end{bmatrix}^\top$;
  \STATE    ~~~~~~$\begin{pmatrix}
 \hat{H}_m^i&\mathbf{z}_m
\end{pmatrix}\begin{pmatrix}
 \mathbf{y}_m^i\\ \beta_m^i
\end{pmatrix}=(\frac{\beta_0^i}{\beta_0^p}) \left \| \Theta \mathbf{r}_{0}^p \right \|\mathbf{e}_1$;
  \STATE    ~~~~~~$\mathbf{x}_m^i=\mathbf{x}_0^i+V_{m}\mathbf{y}_m^i$,~~~~$\mathbf{x}_0^i=\mathbf{x}_m^i$,~~~~$\mathbf{r}_m^i=\mathbf{b}-A\mathbf{x}_m^i-\alpha_i\mathbf{x}_m^i$,~~~~$res_i=\left \| \mathbf{r}_m^i \right \|_2$,~~~~$\beta_0^i=\beta_m^i$;
  \STATE    ~~~${\bf end~for}$
  \STATE    ~~~$\bf if$  $max(res)>tol$
  \STATE    ~~~~~~Find the maximum Euclidean norm in $res$, find the index p, $A+\sigma_pI$ is seed system;
  \STATE    ~~~~~~$\mathbf{r}_0^p= \mathbf{r}_m^p$;
  \STATE    ~~~$\bf else$ 
  \STATE    ~~~~~~break
  \STATE    ${\bf end~for}$
\end{algorithmic}
\end{algorithm}

Finally, solving the solution of the additional system is equivalent to solving the solution of the following system of equations:
\begin{equation}\label{2.4}
	\begin{pmatrix}
		\hat{H}_m^i&\mathbf{z}_{m+1}
	\end{pmatrix}\begin{pmatrix}
		\mathbf{y}_m^{i}\\
		\beta_m^{i}
	\end{pmatrix} =\beta_0^{i} \left \| \Theta \mathbf{r}_0^{1} \right \|_2 \mathbf{e}_1.
\end{equation}
$\beta_{m}^{i}$ represents the multiple of the residual of the seed system for the $i$-th additional system at the $m$-th iteration. Next, perform a QR decomposition on the coefficient matrix of \eqref{2.4}, and solve the system of equations according to the back-substitution method. Given the initial values, the solution of the shifted system at the $m$-th iteration is finally expressed as $\mathbf{x}_m^{i} = \mathbf{x}_0^{i}+V_m\mathbf{y}_m^{i}$. 
In this work, we use the seed selection strategy similar to other works \cite{26,27} to improve the performance of the proposed RandGMRES-Sh method for solving the shifted systems. 
During each cycle, we select the seed system with the maximum residual norm among the nonconverged shifts. 
We choose the seed system such that
$$\left \| \mathbf{r}_m^p \right \| =\max_{i=1,2,\dots,\ell }\left \| \mathbf{r}_m^i \right \|.$$
 We remove the system from the set of shift systems if it converges to the required accuracy. This entire process is repeated until all shifted systems converge.
\subsection{A Restarted RandFOM for shifted linear system}\label{s3.2}
For the Randomized FOM algorithm, we refer to this algorithm as RandFOM for short.
The condition that RandFOM \cite{2} must satisfy is
\begin{equation}\label{2a}
 \Theta (\mathbf{b}- A\mathbf{x})\bot \Theta V_m,
\end{equation}
 which is sketched Petrov-Galerkin condition and $V_m$ is an m-dimensional basis obtained from the RandArnoldi algorithm. To implement RandFOM, we need to reformulate the RandArnoldi relation. It follows from \eqref{2.2} and \eqref{2.3} that 
 \begin{equation}\label{2aa}
  AV_m=V_{m}H_m+\mathbf{v}_{m+1}\mathbf{h}_{m+1}
 \end{equation}
 and 
 \begin{equation}\label{2aaa}
   \Theta AV_m=\Theta V_{m}H_m+\Theta \mathbf{v}_{m+1}\mathbf{h}_{m+1}.
 \end{equation}
 We denote $\bar{H}_m=\begin{pmatrix}
 H_m\\\mathbf{h}_{m+1}
\end{pmatrix}$. According to \eqref{2a} we continue to derive the system of equations that need to be solved after projection
 $$(\Theta V_m)^\top \Theta (\mathbf{r}_0-AV_m\mathbf{y}_m)=0.$$
 It follows from \eqref{aa} and \eqref{2aaa} that
 $$(\Theta V_m)^\top\Theta \mathbf{r}_0-(\Theta V_m)^\top \Theta (V_{m}H_m+\mathbf{v}_{m+1}\mathbf{h}_{m+1})\mathbf{y}_m=0.$$
 Based on the orthogonality of $\Theta V_{m+1}$, we derive 
 \begin{equation}\label{2aaaa}
   H_m\mathbf{y}_m=\left \| \Theta \mathbf{r}_0 \right \|_2 \mathbf{e}_1.
 \end{equation}
Therefore $\mathbf{y}_m=(H_m)^{-1} \left \| \Theta \mathbf{r}_0 \right \|_2 \mathbf{e}_1$ and $\mathbf{x}_m=\mathbf{x}_0+V_m\mathbf{y}_m$ is a solution to RandFOM. 

Based on the idea of collinear residuals, we analyze the residual of the RandFOM algorithm
$$\mathbf{r}_m^{i}=\mathbf{b}-(A+\alpha_i I)\mathbf{x}_0^{i}-(A+\alpha_i I)V_m\mathbf{y}_m^{i}.$$
It follows from \eqref{2aaa} that
$$\mathbf{r}_m^{i}=\mathbf{r}_0^{i}-V_m H_m^i\mathbf{y}_m^{i}-(\mathbf{h}_{m+1}^i\mathbf{y}_m^{i})\mathbf{v}_{m+1}.$$
Here we denote $\bar{H}_m^i=\begin{pmatrix}
 H_m^i\\\mathbf{h}_{m+1}^i
\end{pmatrix}$.
Since $\mathbf{r}_0^i$ can be expressed in terms of $V_{m+1}$, we can obtain
$$\mathbf{r}_m^{i}=V_m(\left \| \Theta \mathbf{r}_0^{i} \right \|_2\mathbf{e}_1-H_m^i\mathbf{y}_m^{i})-(\mathbf{h}_{m+1}^i\mathbf{y}_m^{i})\mathbf{v}_{m+1}.$$
If the solution process for all shifted systems uniformly uses the RandFOM algorithm, we derive that $\left \| \Theta \mathbf{r}_0^{i} \right \|_2\mathbf{e}_1-H_m\mathbf{y}_m^{i}=0$. 
Thus, the residuals of all shifted systems are collinear $$\mathbf{r}_m^ {i}=-(\mathbf{h}_{m+1}^i\mathbf{y}_m^{i})\mathbf{v}_{m+1}$$ and the residual is a constant multiple of the vector $\mathbf{v}_{m+1}$. The detailed implementation of the RandFOM-Sh method is provided in Algorithm \ref{A3}.

We can clearly see that the RandFOM-Sh algorithm basically uses the RandFOM algorithm for all shifted systems. Specifically, it only creates one set of basis vectors $V_m$ with orthonormal columns (in the sense of randomized sketching) for all shifted systems being solved at the same time. Sharing this basis will not reduce the performance of the algorithm.
\renewcommand{\thealgorithm}{5}
\begin{algorithm}
\caption{A Restarted Shifted RandFOM}
\label{A3}
\begin{algorithmic}[1]
\REQUIRE 
    $A\in \mathbb{C} ^{n\times n} $, $\mathbf{b}\in \mathbb{C} ^{n\times 1} $, sketching matrix $\Theta \in \mathbb{C}^{s \times n}$; Maximum number of restarts $maxit$; the number of inner iterations $m$; a prescribed tolerance $tol$; the shifts $\alpha=\{\alpha_1,\alpha_2,\dots,\alpha_\ell\}$.
\ENSURE 
    $\mathbf{x}_m^i, \mathbf{r}_m^i$.\\
   \STATE   $\mathbf{r}_0^i=\mathbf{b}$, $\mathbf{v}=\mathbf{b}$, $\mathbf{x}_m^i= zeros(n,1)$, $\beta^i= \left \| \Theta \mathbf{r}_0 \right \|_2$; $i=1,2,\dots,\ell$.
  \STATE   ${\bf for}$ $j=1:maxit$
  \STATE   $[\Theta V_{m+1}, V_{m+1},\bar{H}_m]=RArnoldi(A,\mathbf{v},m,\Theta)$;
  \STATE   $H= \bar{H}(1:m,:)$; $h= \bar{H}(m+1,m)$; $\mathbf{v}=V_{m+1}(:,m+1)$;  
  \STATE   ~~~${\bf for}$ $i=1:\ell$
  \STATE   ~~~~~~$\mathbf{y}_m^i=(H+\alpha_iI)^{-1}  \left | \beta^i \right |   \mathbf{e}_1$;
  \STATE   ~~~~~~$\beta^i=-h(\mathbf{y}_m^i)(m)$; 
  \STATE   ~~~~~~$\mathbf{x}_m^i=\mathbf{x}_m^i+V_{m}\mathbf{y}_m^i$; $\mathbf{r}_m^i=\mathbf{b}-A\mathbf{x}_m^i-\alpha_i\mathbf{x}_m^i$;
  \STATE   ~~~${\bf end~for}$
  \STATE    ~~~$\bf if$ $\left \| \mathbf{r}_i \right \|_2<tol$ for $i=1,2,\dots\ \ell$.
  \STATE    ~~~~~~break
  \STATE    ~~~${\bf end~if}$ 
  \STATE   ${\bf end~for}$
\end{algorithmic}
\end{algorithm}
\section{Randomized Krylov Subspace Method with Deflated Restarting for Solving Shifted Systems of Equations}\label{sec4}
\subsection{Randomized Krylov subspace method with Deflated Restarting}\label{sec4.1}
This section discusses a simple implicitly restarted method.
In standard restarted GMRES, once the restart limit is reached, the entire Krylov subspace is discarded. This often leads to slow convergence because important spectral components related to small eigenvalues usually slow to converge are lost. To fix this problem, we use a deflated restart strategy. In this strategy, we explicitly keep a set of approximate eigenvectors when restarting, then project the solution onto the subspace orthogonal to these eigenvectors. This deflation process not only speeds up convergence, especially for ill-conditioned problems, but also reduces the solver's sensitivity to the choice of restart dimension, thus improving both robustness and computational efficiency.
Following the development of GMRES, Morgan introduced the GMRES with Deflated Restarting (GMRES-DR) method \cite{14}. During restart, it retains a set of approximate eigenpairs \cite{15,16} and uses them to augment or enrich the new Krylov subspace constructed after the restart. This ensures that the newly generated subspace is not only spanned by the initial residual but also incorporates the most significant spectral information from the previous iteration cycle. Building upon Morgan's GMRES-DR and FOM-DR frameworks, this section presents two novel randomized variants: (1)Randomized GMRES with Deflated Restarting (RandGMRES-DR). (2)Randomized FOM with Deflated Restarting (RandFOM-DR).
\subsubsection{A RandGMRES with Deflated Restarting}
Similar to GMRES-DR \cite{14} , the proposed RandGMRES-DR method must meet two basic conditions:

The first condition is RandGMRES-DR's residual needs to satisfy\cite{1}
\begin{equation}\label{3.1}
\Theta \mathbf{b}-\Theta A\mathbf{x} \bot \Theta AV_m.
\end{equation}
In the first cycle, we consider the sketched residuals in \eqref{3.1} which is the residual of the RandGMRES
$$\Theta \mathbf{r}=\Theta \mathbf{b}-\Theta A\mathbf{x}=\Theta \mathbf{r}_0-\Theta AV_m\mathbf{y}=\Theta V_{m+1}(\left \| \Theta \mathbf{r}_0 \right \|\mathbf{e}_1-\bar{H}_m\mathbf{y} )\in span\{\Theta V_{m+1}\}. $$

The second condition is the harmonic Ritz pair needs to satisfy (\ref{3.2})\cite{6}
\begin{equation}\label{3.2}
\widetilde{\lambda} \Theta \mathbf{\widetilde{x}}-\Theta A\mathbf{\widetilde{x}} \bot \Theta AV_m,
\end{equation}
where $(\widetilde{\lambda}, \mathbf{\widetilde{x}})$ denotes the approximate eigenpair obtained through the randomized Arnoldi process. The sketched residual of this eigenvalue problem needs to satisfy a condition of 
$$\Theta\widetilde{\mathbf{r}}=\widetilde{\lambda} \Theta \mathbf{\widetilde{x}}-\Theta A\mathbf{\widetilde{x}}=\widetilde{\lambda} \Theta V_m\mathbf{\widetilde{y}}-\Theta AV_m\mathbf{\widetilde{y}}=\Theta V_{m+1}(\widetilde{\lambda} \bar{I}\mathbf{\widetilde{y}} -\bar{H}_m\mathbf{\widetilde{y}})\in span\{\Theta V_{m+1}\}.$$

In the first cycle, execute RandGMRES once to obtain the orthonormal matrix $V_{m+1}$ with respect to sketching. There are two relationships that hold $\Theta AV_{m}=\Theta V_{m+1}\bar{H}_m$ and $AV_{m}=V_{m+1}\bar{H}_m$.

Next, the algorithm enters the first restart, where we utilize the basis obtained from the previous iteration to approximate the eigen-information. It follows from \eqref{3.2} that

$$(\Theta AV_{m})^\top  (\Theta \widetilde{\lambda} \mathbf{\widetilde{x}}-\Theta A\mathbf{\widetilde{x}})=0$$ 
By setting $\mathbf{\widetilde{x}}_0=0$, and utilizing $\mathbf{\widetilde{x}}=\mathbf{\widetilde{x}}_0+V_m\mathbf{\widetilde{y}}$ and $\Theta A V_m = \Theta V_{m+1} \bar{H}$, we can derive the following expression 
$$\bar{H}_m^\top (\Theta V_{m+1})^\top  (\Theta \widetilde{\lambda} V_m\mathbf{\widetilde{y}}-\Theta V_{m+1}\bar{H}_m\mathbf{\widetilde{y}})=0.$$
Let $\bar{I}=\binom{I}{0}$, since $\Theta V_{m}=\Theta V_{m+1}\bar{I}$, we are able to derive
$$\bar{H}_m^\top (\Theta V_{m+1})^\top  \Theta V_{m+1}(\widetilde{\lambda} \bar{I}\mathbf{\widetilde{y}} -\bar{H}_m\mathbf{\widetilde{y}})=0.$$
Since $\Theta V_{m+1}$ is orthogonal, we can infer
$$(\widehat{H}_m^\top \widehat{H}_m+h_{m+1,m}^2(\mathbf{e}_{m+1}  \mathbf{e}_{m+1}^\top))\mathbf{\widetilde{y}} =\widetilde{\lambda} \widehat{H}_m^\top \mathbf{\widetilde{y}}.$$

Let $\widehat{H}_m^\top =\bar{H}_m^\top  \bar{I}$  and solve the Ritz pair by solving the above mentioned generalized eigenvalue problem. Let $\{\widetilde{\mathbf{y}}_1,\widetilde{\mathbf{y}}_2, \dots, \widetilde{\mathbf{y}}_k\}$ be the eigenvectors corresponding to the k smallest eigenvalues.

Since $\Theta \mathbf{r}$ and $\Theta\widetilde{\mathbf{r}}$ both belong to $span\{\Theta V_{m+1}\}$ and both are orthogonal to $\Theta AV_{m}$. Then the residual directions of $\Theta \mathbf{r}$ and $\Theta\hat{\mathbf{r}}$ are the same. It follows from \eqref{3.1} and \eqref{3.2} that
\begin{equation}\label{3.3a}
 \Theta AV_{m}\widetilde{\mathbf{y}}_{i}=\widetilde{\lambda}_i\Theta V_m\widetilde{\mathbf{y}}_i+\beta _i\Theta \mathbf{r}.
\end{equation}

Since \eqref{3.3a} holds for $i = 1,2, \dots ,k$, we obtain
\begin{equation}\label{3.3b}
  \Theta AV_m(\widetilde{\mathbf{y}}_1,\widetilde{\mathbf{y}}_2,\dots ,\widetilde{\mathbf{y}}_k)=\Theta V_m(\widetilde{\lambda}_1\widetilde{\mathbf{y}}_1,\widetilde{\lambda} _2\widetilde{\mathbf{y}}_2,\dots,\widetilde{\lambda}_k\widetilde{\mathbf{y}}_k)+(\beta_1\Theta r, \beta_2\Theta \mathbf{r} ,\dots  ,\beta_3\Theta \mathbf{r} ).
\end{equation}

Apply the Gram - Schmidt (GS) process with reorthogonalization to the $k$ Ritz vectors to obtain $W$, thus $span\{\widetilde{\mathbf{y}}_1,\widetilde{\mathbf{y}}_2, \dots ,\widetilde{\mathbf{y}}_k\}=span\{W\}$. It follows from \eqref{3.3b} that
$$\Theta AV_mW\subset span\{\Theta V_mW,\ \Theta \mathbf{r}\},$$
Where $\Theta \mathbf{r}=\Theta V_{m+1}(\left \| \Theta \mathbf{r}_0 \right \|\mathbf{e}_1-\bar{H}_m\mathbf{y} )=\Theta V_{m+1}\mathbf{z} \in span\{\Theta V_{m+1}\}$. Let $\bar{W}=\begin{bmatrix}
  W^\top &0
\end{bmatrix}^\top$ to obtain $\bar{\mathbf{z}}$, make $\mathbf{z}$ orthogonal to $\bar{W}$ to obtain $\bar{\mathbf{z}}$. Therefore, we can assume that there exists a matrix $\bar{H}'_m$ such that 
$$\Theta AV_mW=\Theta V_{m+1}
   \begin{pmatrix}
  \bar{W}&\bar{\mathbf{z}}
\end{pmatrix}\bar{H}'_m$$ holds.
The orthogonalization of $\mathbf{z}$ with respect to $\bar{W}$ is equivalent to the orthogonalization of $\Theta \mathbf{r}$ with respect to $\Theta V_mW$.  Since both $\Theta V_{m+1}$ and $\begin{pmatrix}
  \bar{W}&\bar{\mathbf{z}}
\end{pmatrix}$ are column-orthogonal, we can derive that
$$\bar{H}'_m=\begin{pmatrix}
  \bar{W}&\bar{\mathbf{z}}
\end{pmatrix}^\top(\Theta V_{m+1})^\top \Theta AV_mW=\begin{pmatrix}
  \bar{W}&\bar{\mathbf{z}}
\end{pmatrix}^\top \bar{H}_mW.$$

Let $V_{k+1}=V_{m+1}\begin{pmatrix}
  \bar{W}&\bar{\mathbf{z}}
\end{pmatrix}$. We need to record the orthogonalization coefficient 
$\mathbf{c}$ of $\bar{\mathbf{z}}$ with respect to $\bar{W}$, where $\Theta \mathbf{r}=\Theta V_{k+1}\mathbf{c}$ and $\mathbf{r}=V_{k+1}\mathbf{c}$. 

Then, apply the RandArnoldi iteration from this point to update the rest of $V_{m+1}$. Finally, theorem \ref{T3.1} we propose demonstrates that the Arnoldi relations $AV_m=V_{m+1}\bar{H}_m$ and $\Theta AV_m=\Theta V_{m+1}\bar{H}_m$ remain valid where $\bar{H}$ is upper-Hessenberg, except for a full leading $k+1$ by $k+1$ portion.
\begin{theorem}\label{T3.1}
	If the Arnoldi relation $$\Theta AV_{m}W=\Theta V_{m+1}\begin{pmatrix}
\bar{W}
&\bar{\mathbf{z}} 
\end{pmatrix}\bar{H}'_m$$ with respect to sketching holds, then $$AV_{m}W=V_{m+1}\begin{pmatrix}
\bar{W}
&\bar{\mathbf{z}} 
\end{pmatrix}\bar{H}'_m$$ also holds.
\end{theorem}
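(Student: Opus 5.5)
Rather than trying to "un-sketch" the sketched relation (which would require $\Theta$ to be injective on a larger space), I would derive the unsketched identity directly from the unsketched Arnoldi relation $AV_m=V_{m+1}\bar{H}_m$, which the RandArnoldi process guarantees alongside $\Theta AV_m=\Theta V_{m+1}\bar{H}_m$. Multiplying on the right by $W$ gives
\begin{equation*}
AV_mW=V_{m+1}\bar{H}_mW .
\end{equation*}
So it suffices to show that $\bar{H}_mW=\begin{pmatrix}\bar{W}&\bar{\mathbf{z}}\end{pmatrix}\bar{H}'_m$. This is a statement about small $(m+1)\times k$ coefficient matrices only, and it no longer involves $\Theta$.

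To prove this coefficient identity, I would use the hypothesis together with the sketched Arnoldi relation:
\begin{equation*}
\Theta V_{m+1}\bar{H}_mW=\Theta AV_mW=\Theta V_{m+1}\begin{pmatrix}\bar{W}&\bar{\mathbf{z}}\end{pmatrix}\bar{H}'_m .
\end{equation*}
Since $\Theta V_{m+1}$ has orthonormal columns, multiplying on the left by $(\Theta V_{m+1})^\top$ cancels it and yields $\bar{H}_mW=\begin{pmatrix}\bar{W}&\bar{\mathbf{z}}\end{pmatrix}\bar{H}'_m$. Equivalently, $\Theta V_{m+1}$ has full column rank, so coefficient vectors are uniquely determined by their sketched images. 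Substituting this into the first display gives the claim $AV_mW=V_{m+1}\begin{pmatrix}\bar{W}&\bar{\mathbf{z}}\end{pmatrix}\bar{H}'_m$. Theorem \ref{T2.3} also guarantees that $V_{m+1}$ itself has full column rank, so the coefficient representation is unique on the unsketched side as well; this confirms the correspondence is consistent, though the argument above does not actually need it.

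The main obstacle is a subtlety: $\bar{H}'_m$ was defined by the projection formula $\begin{pmatrix}\bar{W}&\bar{\mathbf{z}}\end{pmatrix}^\top\bar{H}_mW$. The hypothesis, that the sketched relation holds exactly with this $\bar{H}'_m$, is therefore precisely the statement that the columns of $\bar{H}_mW$ lie in $\operatorname{span}\{\bar{W},\bar{\mathbf{z}}\}$. The proof must rely on that hypothesis and not only on the definition of $\bar{H}'_m$, because the span inclusion comes from the harmonic Ritz residual collinearity \eqref{3.3b} and is not automatic. Once the hypothesis is used in the cancellation step above, no embedding inequality from Definition \ref{D2.1} is needed. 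In finite precision the identity holds only up to the loss of orthogonality in $\Theta V_{m+1}$, which is the reason for the reorthogonalization in Algorithm \ref{A1}.
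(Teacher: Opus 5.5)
Your argument is correct and is essentially the paper's proof. The paper also uses the unsketched inclusion $\operatorname{span}\{AV_m\}\subseteq\operatorname{span}\{V_{m+1}\}$ to write $AV_mW=V_{m+1}X$, then identifies $X$ with $\begin{pmatrix}\bar{W}&\bar{\mathbf{z}}\end{pmatrix}\bar{H}'_m$ by the uniqueness that the orthonormal columns of $\Theta V_{m+1}$ give. Your version is slightly cleaner because it names the coefficient matrix explicitly as $\bar{H}_mW$, whereas the paper loosely writes $\bar{H}_m=\bar{H}'_m$ for what should be $X=\begin{pmatrix}\bar{W}&\bar{\mathbf{z}}\end{pmatrix}\bar{H}'_m$.
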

\begin{proof}
Since $span\{AV_{m}\}\subseteq span\{V_{m+1}\}$, there must exist a matrix $\bar{H}_m$ such that the equation $AV_{m}W=V_{m+1}\bar{H}_m$ holds. Furthermore, $\bar{H}_m$ must satisfy $\Theta AV_{m}W=\Theta V_{m+1}\bar{H}_m$; because $\Theta V_{m+1}$ is orthogonal, $\bar{H}_m$ is the unique solution and $\bar{H}_m=\bar{H}'_m$. Consequently, the equation $AV_{m}W=V_{m+1}\begin{pmatrix}
\bar{W}
&\bar{\mathbf{z}} 
\end{pmatrix}\bar{H}'_m$ holds.
\end{proof}

With the Arnoldi relation established, the approximate solution of RandGMRES-DR can be obtained from \eqref{3.1} as given in $$\mathbf{y}=\mathop{\arg\min}\limits_{\mathbf{y}\in \mathbb{C}^m}\left \| \Theta \mathbf{r}_0-\Theta AV_m\mathbf{y} \right \|_2=\mathop{\arg\min}\limits_{\mathbf{y}\in \mathbb{C}^m}\left \| \bar{\mathbf{c}} -\bar{H}_m\mathbf{y} \right \|_2    $$

Let $\bar{\mathbf{c}}=\begin{bmatrix}
 \mathbf{c}^\top  &0
\end{bmatrix}^\top$. Starting from the second cycle, the residual obtained each time is $$\Theta \mathbf{r}=\Theta V_{m+1}(\bar{\mathbf{c}} -\bar{H}_m\mathbf{y})\in span\{ \Theta V_{m+1}\}.$$

Similar to the first cycle, the subsequent cycle utilize the same approach to construct the Arnoldi relation.

From the comparative data in Table \ref{tb5.1} and the algorithm's derivation process, we can see that our proposed RandGMRES-DR has an advantage over GMRES-SDR\cite{6}: it can maintain both the standard Arnoldi relation and its sketched variant, while taking nearly the same computational time. This helps to stably construct the RandGMRES-DR-Sh algorithm.
\renewcommand{\thealgorithm}{6}
\begin{algorithm}
\caption{A RandGMRES with deflated restarting}
\label{A4}
\begin{algorithmic}[1]
\REQUIRE 
    $A\in \mathbb{C} ^{n\times n} $, $\mathbf{b}\in \mathbb{C} ^{n\times 1} $, sketching matrix $\Theta \in \mathbb{C}^{s \times n}$; the number of inner iterations $m$; Maximum number of restarts $maxit$; a prescribed tolerance $tol$; the number of Ritz vectors $k$. The initial guesses $\mathbf{x}_0$
\ENSURE 
    $\mathbf{x}, \mathbf{r}, V_m, \Theta V_m, H, \mathbf{c}$.\\
   \STATE First cycle.$[\Theta V_{m+1}, V_{m+1},\bar{H}_m]=RArnoldi(A,\mathbf{r}_0,\Theta,m)$
   \STATE $\mathbf{y}= \bar{H}_m \setminus \left \| \Theta \mathbf{r}_0 \right \| \mathbf{e}_1$;
   \STATE $\mathbf{x}=\mathbf{x}_0+V_{m}\mathbf{y}$;
   \STATE $\mathbf{z}=\left \| \Theta \mathbf{r}_0 \right \| \mathbf{e}_1-\bar{H}_m\mathbf{y}$;
   \STATE   ${\bf for}$ $i=1:maxit$
   \STATE ~~~Solve for k generalized eigenvectors of $(\widehat{H}_m^\top \widehat{H}_m+h_{m+1,m}^2(\mathbf{e}_{m+1}\mathbf{e}_{m+1}^\top))\mathbf{y} =\lambda \widehat{H}_m^\top \mathbf{y}$ is denoted as $\{\mathbf{y}_1,\mathbf{y}_2,\dots, \mathbf{y}_k\}$, where $\widehat{H}_m^\top =\bar{H}_m^\top  \bar{I}$.
   \STATE ~~~Perform Gram-Schmidt orthogonalization on $\{\mathbf{y}_1,\mathbf{y}_2\dots \mathbf{y}_k\}$ to obtain $W$.
   \STATE ~~~Let $\mathbf{z}$ be orthogonal to $\bar{W}=\begin{bmatrix}
  W^\top &0
\end{bmatrix}^\top$ to obtain $\bar{\mathbf{z}}$, let $W=\begin{pmatrix}\bar{W}&\bar{\mathbf{z}}\end{pmatrix}$, and record the orthogonalization coefficient $\mathbf{c}$.
   \STATE ~~Let $\bar{H}_m'=\begin{pmatrix}
\bar{W}&\bar{\mathbf{z}} 
\end{pmatrix}^\top \bar{H}_mW$, $V_{k+1}=V_{m+1}\begin{pmatrix}
\bar{W}&\bar{\mathbf{z}} 
\end{pmatrix}$ and $\Theta V_{k+1}=\Theta V_{m+1}\begin{pmatrix}
\bar{W}&\bar{\mathbf{z}} 
\end{pmatrix}$.
\STATE ~~~Compute $A\mathbf{v}_{k+1}$, and continue the RandArnoldi process on $V_{k+1}$ until $V_m$ and $\bar{H}_m$ are obtained.
\STATE ~~~Compute $\mathbf{y}=\underset{\mathbf{y}\in \mathbb{C}^m }{argmin}\left \| \bar{\mathbf{c}} -\bar{H}_m\mathbf{y} \right \|_2$.
\STATE ~~~$\mathbf{x}=\mathbf{x}+V_m\mathbf{y}$, and if $\left \| \mathbf{r} \right \|_2 < tol$, then break.
\STATE   ${\bf end~for}$
\end{algorithmic}
\end{algorithm}

\begin{table}[H]
\scriptsize
  \centering
  \caption{Comparison of several defalted restarted Algorithms, tol=1e-8, Inner Iterations=50}%
  \label{b5.1}
    \begin{tabular}{p{1.5cm}|p{3.5cm}|p{1.0cm}|p{1.5cm}}
     \hline
      Datasets   &Method&CPU(s)
      &Outer Iterations
    \\ \hline
    
    \multirow{8}{*}{\tt SiO2}
    & GMRES-SDR\newline GMRES-DR\newline FOM-DR
    & 14.76\newline 36.55\newline 33.05
    & 19\newline 16\newline 15
    \\ \cline{2-2}
    & \textbf{RandGMRES-DR}\newline \textbf{RandFOM-DR}
    & \textbf{16.70}\newline \textbf{20.42}
    & \textbf{18}\newline \textbf{20}
    \\ \hline

    \multirow{8}{*}{\tt torso3}
    & GMRES-SDR\newline GMRES-DR\newline FOM-DR
    & 6.96\newline 17.32\newline 18.65
    & 9\newline 8\newline 9
    \\ \cline{2-2}
    & \textbf{RGMRES-DR}\newline \textbf{RFOM-DR}
    & \textbf{7.93}\newline \textbf{8.01}
    & \textbf{10}\newline \textbf{9}
    \\ \hline
    
    \multirow{8}{*}{\tt CO}
    & GMRES-SDR\newline GMRES-DR\newline FOM-DR
    & 27.49\newline 56.69\newline 63.11
    & 34\newline 23\newline 23
    \\ \cline{2-2}
    & \textbf{RGMRES-DR}\newline \textbf{RFOM-DR}
    & \textbf{31.15}\newline \textbf{24.80}
    & \textbf{32}\newline \textbf{26}
    \\ \hline
    \end{tabular}%
\end{table}%
We begin by evaluating the performance of the proposed RandGMRES-DR algorithm compared with the GMRES-SDR method. The GMRES-SDR method combines a truncated Arnoldi process with sketching techniques, and we test both methods on several large-scale datasets.
 We choose the number of harmonic Ritz vectorss to be 10, the number of inner iterations to be 50, and the accuracy to be a residual 2-norm of $10^{-8}$. As shown by the data in Table \ref{b5.1}, although the proposed RandGMRES-DR algorithm has similar computational time to the GMRES-SDR algorithm \cite{6} when solving linear systems, the analysis in Section \ref{sec4} shows that our RandGMRES-DR method, together with RandFOM-DR, provides a simpler way to ensure residual collinearity. Under this setting, all linear systems meet the corresponding Arnoldi relation.

\subsubsection{A RandFOM with Deflated Restarting}
Similar to FOM-DR, the proposed RandFOM-DR implementation must satisfy two fundamental conditions:

The first condition is RandFOM-DR's residual needs to satisfy
\begin{equation}\label{3.3}
\Theta \mathbf{b}-\Theta A\mathbf{x} \perp \Theta V_m.
\end{equation}

The second condition is the harmonic Ritz pair needs to satisfy
\begin{equation}\label{3.4}
\Theta \widetilde{\lambda} \widetilde{\mathbf{x}}-\Theta A\widetilde{\mathbf{x}} \perp \Theta V_m.
\end{equation}

In the first cycle, we consider the residuals in \eqref{3.3} which is the residual of the RandFOM. It is follows from \eqref{2aaa} that
\begin{footnotesize}
\begin{equation}
  \Theta \mathbf{r}=\Theta \mathbf{r}_0-\Theta AV_m\mathbf{y}=\Theta V_m(\left \| \Theta \mathbf{r}_0 \right \|_2\mathbf{e}_1-H_m\mathbf{y} )-\Theta \mathbf{v}_{m+1}(\mathbf{h}_{m+1} \mathbf{y})=-(\mathbf{h}_{m+1} \mathbf{y}) \Theta \mathbf{v}_{m+1}.
\end{equation}
\end{footnotesize}

we consider the residuals in \eqref{3.4}
$$\Theta \widetilde{\mathbf{r}} =\widetilde{\lambda} \Theta \widetilde{\mathbf{x}}-\Theta A\widetilde{\mathbf{x}}=\widetilde{\lambda} \Theta V_m\widetilde{\mathbf{y}}-\Theta AV_m\widetilde{\mathbf{y}}.$$

Since $\Theta \widetilde{\mathbf{r}} \perp \Theta V_m$, we derive
$$\Theta \widetilde{\mathbf{r}}= \Theta V_m(\widetilde{\lambda} \widetilde{\mathbf{y}}-H_m\widetilde{\mathbf{y}})-\Theta \mathbf{v}_{m+1}(\mathbf{h}_{m+1}\widetilde{\mathbf{y}})=-(\mathbf{h}_{m+1}\widetilde{\mathbf{y}})\Theta \mathbf{v}_{m+1}.$$

Since the residuals of \eqref{3.3} and \eqref{3.4} are both aligned with $\mathbf{v}_m$, we obtain the following relation:
$$\Theta AV_m\widetilde{\mathbf{y}}_i=\widetilde{\lambda}_i\Theta V_m\widetilde{\mathbf{y}}_i+\beta_i\Theta \mathbf{v}_{m+1}.$$for every $i = 1,2, \dots ,k$.

Apply the Gram-Schmidt (GS) process to $k$ Ritz vectors to obtain $W$. Thus\\ 
$\Theta AV_{m}W \subset span\{\Theta V_mW,\ \Theta \mathbf{v}_{m+1}\}$.

There must exist an $\bar{H}_{k+1}$ such that $$\Theta AV_{m}W=\Theta V_{m+1}\hat{W}\bar{H}_{k+1}.$$
Since the columns of $\Theta V_{m+1}$ and $\hat{W}=diag(W,1)$ are orthogonal, we can conclude that
$$\bar{H}_{k+1}=\hat{W}^\top (\Theta V_m)^\top \Theta AV_{m+1}W.$$
It is assumed that $$\bar{H}_m=(\Theta V_m)^\top \Theta AV_{m+1},$$ update $$\bar{H}_{k+1}=\hat{W}^\top \bar{H}_mW \qquad and \qquad  V_{k+1}=V_{m+1}\hat{W}.$$
Apply the Arnoldi iteration from this point to form the rest of $V_{m+1}$. Similar to RandGMRES-DR, RandFOM-DR also has two RandArnoldi relations holding: $\Theta AV_{m}=\Theta V_{m+1}\bar{H}_m$ and  $AV_{m}=V_{m+1}\bar{H}_m$. 

According to \eqref{3.3} the approximate solution of RandFOM-DR is denoted as $\mathbf{x}=\mathbf{x}_0+V_m\mathbf{y}$, where $\mathbf{y}$ is obtained by solving the upper Hessenberg system:
\begin{equation}\label{3.5}
H_m\mathbf{y}=\left \| \Theta \mathbf{r}_0 \right \|_2\mathbf{e}_{k+1}.
\end{equation}
$\mathbf{e}_{k+1}$ denotes the column vector whose $k+1$-th entry equals 1 and all other entries are 0.

Starting from the second cycle, the residual obtained each time is
$$\Theta \mathbf{r}=\Theta V_m(\left \| \Theta \mathbf{r}_0 \right \|_2\mathbf{e}_{k+1}-H_m\mathbf{y} )+\Theta (\mathbf{h}_{m+1}^\top \mathbf{y})\mathbf{v}_{m+1}.$$
According to equation \eqref{3.3}, the previous term is zero, so the residual vector is parallel to $\mathbf{v}_{m+1}$. Like the first cycle, we use the same method in the following cycles to build the Arnoldi relation. The detailed steps of the RGMRES-DR method are shown in Algorithm \ref{A5}.

\renewcommand{\thealgorithm}{7}
\begin{algorithm}
\caption{A RandFOM with deflated restarting}
\label{A5}
\begin{algorithmic}[1]
\REQUIRE 
    $A\in \mathbb{C} ^{n\times n} $, $\mathbf{b}\in \mathbb{C} ^{n\times 1} $, sketching matrix $\Theta \in \mathbb{C}^{s \times n}$; the number of inner iterations $m$; Maximum number of restarts $maxit$; a prescribed tolerance $tol$; the number of Ritz vectors $k$. The initial guesses $\mathbf{x}_0$.
\ENSURE 
    $\mathbf{x}, \mathbf{r}, V_m, \Theta V_m, H, \mathbf{c}$.\\
   \STATE $[\Theta V_{m+1}, V_{m+1},\bar{H}]=RandArnoldi(A,\mathbf{r}_0,\Theta,m);$
   \STATE $H_m=\bar{H}(1:m,:);$
   \STATE $\mathbf{y}_j= H_m\setminus \left \| \Theta \mathbf{r}_0 \right \|_2 \mathbf{e}_1;$
   \STATE $\mathbf{x}=\mathbf{x}_0+V_{m}\mathbf{y};$
   \STATE   ${\bf for}$ $i=1:maxit$
   \STATE ~~~Solve for k eigenvectors of $H_m\mathbf{y} =\lambda \mathbf{y}$ is denoted as $\{\mathbf{y}_1,\mathbf{y}_2, \dots, \mathbf{y}_k\};$
   \STATE ~~~Perform Gram-Schmidt orthogonalization on $\{\mathbf{y}_1,\mathbf{y}_2, \dots, \mathbf{y}_k\}$ to obtain $W$.
   \STATE ~~~The extension of $\bar{W}=\begin{bmatrix}
  W^\top &0
\end{bmatrix}^\top$ to $\hat{W}=diag(W,1)$.
   \STATE ~~~Let $\bar{H}'=\hat{W}^\top \bar{H}W$, $V_{k+1}=V_{m+1}\hat{W}$ and $\Theta V_{k+1}=\Theta V_{m+1}\hat{W}$.
\STATE ~~~Compute $A\mathbf{v}_{k+1}$, and continue the RandArnoldi process on $V_{k+1}$ until $V_m$ and $H_m$ are obtained
\STATE ~~~Compute $H_m\mathbf{y}=\left \| \Theta \mathbf{r}_0 \right \|_2\mathbf{e}_{k+1}$.
\STATE ~~~$\mathbf{x}=\mathbf{x}+V_m\mathbf{y}$, and if $\left \| \mathbf{r} \right \|_2 < tol$, then break.
\STATE   ${\bf end~for}$
\end{algorithmic}
\end{algorithm}
\subsection{Randomized Krylov Subspace Method with Deflated Restarting for Solving Shifted Systems of Equations}
Based on the analysis in Section \ref{sec4.1}, we have established that both RandGMRES-DR and RandFOM-DR continue to satisfy the Arnoldi relation after being augmented with harmonic Ritz pairs.
Within this framework, a key algorithmic step is to enforce the collinearity of residuals across all shifted systems. The following outlines the procedure.
\subsubsection{A Shifted RandGMRES with Deflated Restarting}
Section \ref{sec3}    explained how to implement restarted RandGMRES to solve multiple shifted systems at the same time. For the deflated restarted RandGMRES method (RandGMRES-DR), we can also solve multiple shifted systems together. From the derivation in Section \ref{sec4.1}, we know that RandGMRES-DR follows a similar Arnoldi-type recurrence. However, the resulting matrix $\bar{H}_m$ does not have the upper Hessenberg structure. Even so, this difference does not affect the collinearity of the residuals among the additional systems.

This is the same as RandGMRES-Sh, except that it uses RandGMRES-DR instead of RandGMRES, and the form of $\bar{H}_m^1$ is different.
In a single cycle, we choose the system $$(A+\alpha_1I)\mathbf{x}_1=\mathbf{b}$$ as the seed system. We denote the Arnoldi relation for the seed system as equation $(A+\alpha_1 I)V_m=V_{m+1}\bar{H}_m^1$. $V_m$ also serves as the basis for the additional systems, and $\hat{H}_m^i=\bar{H}_m^1+\bar{\alpha}_i
\bar{I}$
, $i=2,3,\dots,\ell$ is the associated upper Hessenberg matrix except for a full leading $k+1$ by $k+1$ portion. Here, we denote $\bar{\alpha_i}=\alpha_i-\alpha_1$.

We assume that the residuals satisfy the relation $\mathbf{r}_0^i=\beta_0^i\mathbf{r}_0^1$ for $i=2,3,\dots,\ell$, where $\mathbf{r}_0^1$ denotes the residual corresponding to the initial guess $\mathbf{x}_0^1$ of the seed system. It is worth noting that this relation can be satisfied by setting $\beta_0^i=1$ for all $i$ when we take $\mathbf{x}_0^i=0$. After applying RandGMRES to the seed system, a critical requirement for obtaining approximate solutions to the additional systems is to meet that the residual of the $i$-th additional system is collinear with that of the seed system
\begin{equation}\label{3.2a}
  \mathbf{r}_m^{i}=\beta_m^{i}\mathbf{r}_m^{1},\ i=2,3,\dots,\ell
\end{equation} with a collinearity coefficient of $\beta_m^{i}$, the $m$ denotes the number of inner iterations.
\renewcommand{\thealgorithm}{8}
\begin{algorithm}
\caption{A Shifted RandGMRES with Deflated Restarting}
\begin{algorithmic}[1]\label{A8}
\REQUIRE 
    $A\in \mathbb{C} ^{n\times n} $, $\mathbf{b}\in \mathbb{C} ^{n\times 1} $, sketching matrix $\Theta \in \mathbb{C}^{s \times n}$; the number of inner iterations $m$; Maximum number of restarts $maxit$; a prescribed tolerance $tol$; the number of Ritz vectors $k$; the shifts $\alpha=\{\alpha_1,\alpha_2,\dots,\alpha_\ell\}$.
\ENSURE 
    $\mathbf{x}_m^i, \mathbf{r}_m^i.$\\
   \STATE During the first cycle, the current seed system is $(A+\alpha_1I)\mathbf{x}_m^1=\mathbf{b}$, let all initial guesses satisfy $\mathbf{x}_0^i = 0\ and\  \mathbf{r}_0^i=\mathbf{b},\ i=1,2,\dots,\ell$. Apply RandGMRES to the seed system $[V_m,\bar{H}_m^1,\mathbf{x}_m^1,\Theta V_{m+1},\Theta \mathbf{r}_0^1]=RandGMRES(A+\alpha _1I,\mathbf{b},tol,\Theta,m,\mathbf{x}_0^1)$, $\mathbf{x}_0^1=\mathbf{x}_m^1$.
   \STATE For the additional system $(A+\alpha_iI)\mathbf{x}_m^i=\mathbf{b}$, $\hat{H}_m^i=\bar{H}_m^1+(\alpha_i-\alpha_1)\bar{I}$
$$\begin{pmatrix}
  \hat{H}_m^i&\mathbf{z}_m
\end{pmatrix}\binom{\mathbf{y}_m^i}{\beta_m^i}=\left \| \Theta \mathbf{r}_0^i \right \|_2 \mathbf{e}_1, i\ne 1.$$
Let $\beta = (\beta_m^1, \beta_m^2, \dots, \beta_m^L)$, $\mathbf{x}_m^i=\mathbf{x}_0^i+V_m\mathbf{y}_m^i$, $\mathbf{r}_m^i=\mathbf{b}-(A+\alpha_iI)\mathbf{x}_m^i$, $\mathbf{x}_0^i=\mathbf{x}_m^i$.
   \STATE Find the maximum value in $\beta$ denoted as $\beta_m^p$, the current seed system is $(A+\alpha_pI)\mathbf{x}_m^p=\mathbf{b}$.
   \STATE Using the RandGMRES with deflated restarting in Algorithm \ref{A4}, Utilize the harmonic Ritz pairs to update the basis and Hessenberg matrix for deflated restarting to obtain $\mathbf{x}_m^p$, $V_m$, $\mathbf{r}_m^p$ and $\bar{H}_m^p$, $\mathbf{x}_0^p=\mathbf{x}_m^p$.
   \STATE For the additional system $(A+\alpha_iI)\mathbf{x}_m^i=\mathbf{b}$, $\hat{H}_m^i=\bar{H}_m^p+(\alpha_i-\alpha_p)\binom{I}{0}$
$$\begin{pmatrix}
  \bar{H}_m^i&\mathbf{z}_m
\end{pmatrix}\binom{\mathbf{y}_m^i}{\bar{\beta}_m^i}=(\frac{\beta _m^i}{\beta_m^p}) \bar{\mathbf{c}}.$$
Then $\beta_m^i=\bar{\beta}_m^i,\ \mathbf{x}_m^i=\mathbf{x}_0^i+V_m\mathbf{y}_m^i$, $\mathbf{r}_m^i=\mathbf{b}-(A+\alpha_iI)\mathbf{x}_m^i$, $\mathbf{x}_0^i=\mathbf{x}_m^i$, where $i\ne p$.
   \STATE If the maximum value in the residual vector $\left \| \mathbf{r}_m^i \right \|$ is less than $tol$, the iteration terminates; otherwise, the process returns to Step 3 of the algorithm.
\end{algorithmic}
\end{algorithm}
Let the approximate solutions of the additional systems be $\mathbf{x}_m^i=\mathbf{x}_0^i+V_m\mathbf{y}_m^i\ (i=2,3,\dots,\ell)$. 
Substituting the residual from the previous step into the expressions for both the seed system and the additional system, it follows from \eqref{3.2a} that
\begin{equation}\label{3.2aa}
 \mathbf{b}-(A+\alpha_i I)\mathbf{x}_0^{i}-(A+\alpha_i I)V_m\mathbf{y}_m^{i}=\beta_m^{i}(\mathbf{b}-(A+\alpha_1 I)\mathbf{x}_0^{1}-(A+\alpha_1 I)V_m\mathbf{y}_m^{1}).
\end{equation}
Applying Theorem \ref{T2.3}, we can enforce the Arnoldi relation to \eqref{3.2aa} to obtain:
\begin{equation}\label{3.2aaa}
  \mathbf{r}_0^{i}-V_{m+1}\hat{H}^i_m\mathbf{y}_m^{i}=\beta_m^{i}(\mathbf{r}_0^{1}-V_{m+1}\bar{H}_m^1\mathbf{y}_m^{1}).
\end{equation}
Given that $\mathbf{r}_0^{1}$ forms the first column of $V_m$, it follows that $V_m$ can be extracted from equation \eqref{3.2aaa}. It follows from \eqref{3.2aaa} that
\begin{equation}\label{3.2aaaa}
  V_{m+1}(\beta_0^{i}
 \bar{\mathbf{c}}-\hat{H}_m^i\mathbf{y}_m^{i})=\beta_m^{i}V_{m+1}\mathbf{z}_m.
\end{equation}
According to Theorem \ref{T2.3}, it is evident that the solution to problem \eqref{3.2aaaa} is also the solution to problem
$$
  \hat{H}^i_m\mathbf{y}_m^{i}+\beta_m^{i}\mathbf{z}_m=\beta_0^{i}\bar{\mathbf{c}}.
$$
The detailed implementation of the RGMRES-DR-Sh method is provided in Algorithm \ref{A8}.

When enforcing collinear residuals across the shifted systems, the linear independence of the column vectors in $V_m$ (as established in Theorem \ref{T2.3}) serves as a crucial prerequisite. During the orthogonalization of $\mathbf{z}_m$ against $W$, the resulting coefficients must be preserved as a vector $\mathbf{c}$ for subsequent derivations. We name this complete approach the RandGMRES-DR-Sh method.

\subsubsection{A Shifted RandFOM with Deflated Restarting}
Observe the residual of the RandFOM-DR method for the system of equations with shifts
\begin{align*}
	\mathbf{r} &=\mathbf{r}_0-(A+\alpha_i I)V_m\mathbf{y}\\ &=V_m(\left \| \Theta \mathbf{r}_0 \right \|\mathbf{e}_{k+1}-H_m^i\mathbf{y} )-(\mathbf{h}_{m+1} \mathbf{y})\mathbf{v}_{m+1}.
\end{align*}
\renewcommand{\thealgorithm}{9}
\begin{algorithm}
\caption{A Shifted RandFOM with Deflated Restarting}
\begin{algorithmic}[1]\label{A9}
\REQUIRE 
    $A\in \mathbb{C} ^{n\times n} $, $\mathbf{b}\in \mathbb{C} ^{n\times 1} $, sketching matrix $\Theta \in \mathbb{C}^{s \times n}$; the number of inner iterations $m$; Maximum number of restarts $maxit$; a prescribed tolerance $tol$; the number of Ritz vectors $k$; the shifts $\alpha=\{\alpha_1,\alpha_2,\dots,\alpha_\ell\}$.
\ENSURE 
    $\mathbf{x}_m^i, \mathbf{r}_m^i.$\\
   \STATE During the first cycle, let all initial guesses satisfy $\mathbf{x}_0^i = 0\ and\  \mathbf{r}_0^i=\mathbf{b},\ i=1,2,\dots,\ell$. Apply  [$V_m$,$\bar{H}_m$]=RandArnoldi(A,$\mathbf{b}$,m).
   \STATE Let $H_m=\bar{H}_m(1:m,:)$, $H_m^i=H_m+\alpha_iI$, find the approximate solution $\mathbf{y}_m^i={(H_m^i)}^{-1}\left \| \Theta \mathbf{r}_0^i \right \|_2\mathbf{e}_{k+1}$, $\mathbf{x}_m^i=\mathbf{x}_0^i+V_m\mathbf{y}_m^i$, $\mathbf{x}_0^i=\mathbf{x}_m^i$, $\mathbf{r}_m^i=\mathbf{b}-(A+\alpha_iI)\mathbf{x}_m^i$, $i=1,2,\dots,\ell$.
   \STATE The maximum norm in $\mathbf{r}_m^i$ is denoted as $\mathbf{r}_m^p$, the current seed system is $(A+\alpha_pI)\mathbf{x}_m^p=\mathbf{b}$.
   \STATE Using the RandFOM with deflated restarting in Algorithm \ref{A5}, Using the harmonic Ritz pairs to update the basis and Hessenberg matrix for deflated restarting to obtain $\mathbf{x}_m^p$, $V_m$, $\mathbf{r}_m^p$ and $\bar{H}_m^p$, $\mathbf{x}_0^p=\mathbf{x}_m^p$.
   \STATE
   Let $H_m^i=H_m^p+(\alpha_i-\alpha_p)I$, $\mathbf{y}_m^i={(H_m^i)}^{-1}\left \| \Theta \mathbf{r}_0^i \right \|_2\mathbf{e}_{k+1}$, $\mathbf{x}_m^i=\mathbf{x}_0^i+V_m\mathbf{y}_m^i$, $\mathbf{x}_0^i=\mathbf{x}_m^i$, $\mathbf{r}_m^i=\mathbf{b}-(A+\alpha_iI)\mathbf{x}_m^i$, where $i\ne p$.
   \STATE If the maximum value in the residual vector $\left \| \mathbf{r}_m^i \right \|$ is less than $tol$, the iteration terminates; otherwise, the process returns to Step 3 of the algorithm.
\end{algorithmic}
\end{algorithm}
According to the relation in \eqref{3.5}, the residual of the shifted system is given by $\mathbf{r} = -(\mathbf{h}_{m+1} \mathbf{y})\mathbf{v}_{m+1}$, indicating that all such residuals are mutually collinear.

The detailed implementation of the RandFOM-DR-Sh method is provided in Algorithm \ref{A9}.

It can be clearly observed that the RandFOM-DR-Sh algorithm essentially applies the RandFOM-DR method collectively to all shifted systems. A key implication is that RandFOM-DR-Sh generates only a single sequence of basis vectors $ V_m $ which are column-orthogonal in the sketched sense that is shared across all shifted systems being solved simultaneously. Importantly, this shared basis strategy does not lead to any performance degradation, as information is efficiently reused without compromising numerical effectiveness.

\section{Theoretical analysis}
In this section, we present the theoretical foundations and connections among several randomized algorithms with an iterative cycling framework. Firstly, we demonstrate that the randomized shifted algorithm proposed in this paper is also a projection method. We discusses the relationship between  RandGMRES-Sh and RandFOM-Sh, and their connections with GMRES-Sh and FOM-Sh. Additionally, we have proven the relationship between the solutions of algorithms RandGMRES-Sh and GMRES-Sh, as well as between Algorithms RandFOM-Sh and FOM-Sh. 

\subsection{RandGMRES-Sh and RandFOM-Sh are projection methods}
 We build on the one-cycle description of RandGMRES-Sh and RandFOM-Sh as introduced in section \ref{sec3}.
 It is established in \cite{6} that sGMRES can be characterized as a projection method, with explicit descriptions of both residual and error projections including their range and null space independent of constraints imposed by the basis $V_m$. Building on this perspective, we further demonstrate that the additional systems in shifted linear system solved by Krylov subspace methods also admit a projection framework. Specifically, we derive the corresponding projection operators and characterize their range and null space properties.
 From Section \ref{sec3}, it can be observed that RandGMRES-Sh solves the following system when computing the approximate solution $\mathbf{y}_m^i$ for the additional system:
$$
	\begin{pmatrix}
		\hat{H}_m^i&\mathbf{z}_{m+1}
	\end{pmatrix}\begin{pmatrix}
		\mathbf{y}_m^{i}\\
		\beta_m^{i}
	\end{pmatrix} =\beta_0^{i} \left \| \Theta \mathbf{r}_0^{1} \right \|_2 \mathbf{e}_1.
$$

Moreover, since $V_{m+1}$ generated by $m$ iterations of the RandArnoldi process is of full column rank, left-multiplying both sides of the above equation by $V_{m+1}$ preserves the equality. Therefore, the solution to the additional system reduces to:
$$
	\begin{pmatrix}
		V_{m+1}\hat{H}_m^i&V_{m+1}\mathbf{z}_{m+1}
	\end{pmatrix}\begin{pmatrix}
		\mathbf{y}_m^{i}\\
		\beta_m^{i}
	\end{pmatrix} =\beta_0^{i} \mathbf{r}_0^1,
$$
It follows from (\ref{ab}) that
\begin{equation}\label{51}
 	\begin{pmatrix}
		(A+\alpha_i I)V_{m+1}&V_{m+1}\mathbf{z}_{m+1}
	\end{pmatrix}\begin{pmatrix}
		\mathbf{y}_m^{i}\\
		\beta_m^{i}
	\end{pmatrix} =\beta_0^{i} \mathbf{r}_0^1,
\end{equation}

Solve the system of equations (\ref{51}) using the Moore-Penrose inverse
$$\mathbf{y}_m^i=\begin{pmatrix}
  (A+\alpha_iI)V_m&V_{m+1}\mathbf{z}_{m+1}
\end{pmatrix}^\dagger(1:m,:) \mathbf{r}_0^1.$$

We derive the relationship between the residual after one cycle and the initial residual
$$\mathbf{r}_m^i=(I-(A+\alpha_iI)V_m\begin{pmatrix}
  (A+\alpha_iI)V_m&V_{m+1}\mathbf{z}_{m+1}
\end{pmatrix}^\dagger(1:m,:))\mathbf{r}_0^1,$$
and the approximate solution of the $i$-th additional system is
$$\mathbf{x}_m^i=\mathbf{x}_0^i+V_m\begin{pmatrix}
  (A+\alpha_iI)V_m&V_{m+1}\mathbf{z}_{m+1}
\end{pmatrix}^\dagger(1:m,:))(A+\alpha_iI)(\mathbf{x}-\mathbf{x}_0^i)=\mathbf{x}_0^i+\Pi_{\mathcal{K}_m} \mathbf{\eta}.$$
where $\mathbf{\eta}=\mathbf{x}-\mathbf{x}_0$, and $\mathbf{x}$ denotes the exact solution of the additional system.

We can straightforwardly derive expression $$\begin{pmatrix}
  (A+\alpha_iI)V_m&V_{m+1}\mathbf{z}_{m+1}
\end{pmatrix}^\dagger(1:m,:)(A+\alpha_iI)V_m=I.$$
We can prove that $$\Phi_{\mathcal{K}_m}  =(A+\alpha_iI)V_m\begin{pmatrix}
  (A+\alpha_iI)V_m&V_{m+1}\mathbf{z}_{m+1}
\end{pmatrix}^\dagger(1:m,:)$$ is a residual operator with range space $Range(\Phi_{\mathcal{K}_m}) \subseteq Range((A+\alpha_iI)V_m)$, null space $\mathcal{N}(\Phi_{\mathcal{K}_m}) \subseteq Range((\begin{pmatrix}
  (A+\alpha_iI)V_m&V_{m+1}\mathbf{z}_{m+1}
\end{pmatrix}^\dagger(1:m,:))^{\top})^{\perp}$ and $$\Pi_{\mathcal{K}_m}=V_m\begin{pmatrix}
  (A+\alpha_iI)V_m&V_{m+1}\mathbf{z}_{m+1}
\end{pmatrix}^\dagger(1:m,:))(A+\alpha_iI)$$ is a error operator with range space $Range(\Pi_{\mathcal{K}_m}) \subseteq Range(V_m)$, null space $\mathcal{N}(\Pi_{\mathcal{K}_m})\\ \subseteq Range((A+\alpha_iI)^{\top}(\begin{pmatrix}
  (A+\alpha_iI)V_m&V_{m+1}\mathbf{z}_{m+1}
\end{pmatrix}^\dagger(1:m,:))^{\top})^{\perp}$ , both of which are projection operators. 

According to the analysis in section \ref{s3.2}, RandFOM-Sh has the same effect as RandFOM when solving additive systems. Furthermore, as the analysis in \cite{3}, RandFOM is a projection method.
\subsection{The relationship between the solutions of RandGMRES-Sh and RandFOM-Sh}
For a given problem, when the GMRES and FOM algorithms span the same solution subspace, their respective iterates satisfy the following relation \cite{17} 
\begin{equation}\label{yy}
  \mathbf{x}_{m}^{G} =s_{m}^{2} \mathbf{x}_{m-1}^{G} +c_{m}^{2} \mathbf{x}_{m}^{F}
\end{equation}
and
\begin{equation}\label{yy1}
  \mathbf{r}_{m}^{G} =s_{m}^{2} \mathbf{r}_{m-1}^{G} +c_{m}^{2} \mathbf{r}_{m}^{F},
\end{equation}
where the superscripts $F$ and $G$ denote the FOM and GMRES iterates, respectively. As is well-established, the FOM-Sh method essentially applies the standard FOM algorithm individually to each shifted system. Therefore, to establish the connection between GMRES-Sh and FOM-Sh, it suffices to analyze the relationship between the solutions produced by the standard GMRES algorithm and those generated by GMRES-Sh when applied to the same shifted system.

The primary objective is to establish the relationship between the standard GMRES algorithm and the GMRES-Sh variant. Here, we assume that the seed system has zero shift, consider both methods solving the same shifted system $(A+ \sigma I)\mathbf{x}=\mathbf{b}$, where GMRES-Sh takes $A\mathbf{x} = \mathbf{b}$ as its seed system. After projection onto the appropriate subspace, the two methods respectively solve the projected systems \eqref{4.1a} and \eqref{4.1b} to generate their iterative solutions.
\begin{subequations}
  \begin{align}
  \mathbf{y}_m^G=\mathop{\arg\min}\limits_{\mathbf{y}_m^G\in \mathbb{C}^{n}} \left \| (\bar{H}_m+\alpha \bar{I})\mathbf{y}_1 -\beta \mathbf{e}_1 \right \|_2 \label{4.1a} \\
    (\bar{H}_m+\alpha \bar{I}\ \ \ \mathbf{z}_{m+1})\begin{pmatrix}
 \mathbf{y}_m^{SG}\\ \beta _0
\end{pmatrix}  =\beta \mathbf{e}_1 \label{4.1b}
  \end{align}
\end{subequations}
Let $\mathbf{y}_m^G$ and $\mathbf{y}_m^{SG}$ be the projections of GMRES and GMRES-Sh onto the basis $V_m$, respectively. Perform a full $QR$ decomposition and an economy-size $Q_mR_m$ decomposition of the shifted Hessenberg matrix: $$\bar{H}_m+\alpha \bar{I}=QR=Q_mR_m,$$ where $R_m$ is an invertible upper triangular matrix. Further, let $$Q=\left [  \mathbf{q}_1,\mathbf{q}_2,\dots ,  \mathbf{q}_{m+1} \right ] \ and \ Q^\top =\left [   \widetilde{\mathbf{q}} _1,\widetilde{\mathbf{q}} _2,\dots,   \widetilde{\mathbf{q}} _{m+1} \right ].$$
We then solve the projected systems \eqref{4.1a} and \eqref{4.1b} via an economy-size QR decomposition of the corresponding Hessenberg matrix:
$$\left\{\begin{matrix}
 \begin{pmatrix}
 R_m\\0
\end{pmatrix}\mathbf{y}_m^G=\beta \widetilde{\mathbf{q}}_1, \\
\begin{pmatrix}
\begin{matrix}
R_m\\0
\end{matrix}&Q^\top \mathbf{z}_{m+1} 
\end{pmatrix}\begin{pmatrix}
 \mathbf{y}_m^{SG}\\\beta _0
\end{pmatrix}=\beta \widetilde{\mathbf{q}}_1.
\end{matrix}\right .$$
It follows that the solution takes the form
\begin{subequations}
  \begin{align}
   \mathbf{y}_m^G&=R^{-1}_m\beta \widetilde{\mathbf{q}}_1(1:m) \label{4.2a}\\
    \begin{pmatrix}
 \mathbf{y}_m^{SG}\\\beta _0
\end{pmatrix}&=\begin{pmatrix}
  R_m^{-1}&-\frac{1}{\mathbf{q}_{m+1}^\top \mathbf{z}_{m+1}} R^{-1}_m(Q^\top \mathbf{z}_{m+1})(1:m) \\
  0&\frac{1}{\mathbf{q}_{m+1}^\top {\mathbf{z}}_{m+1}}
\end{pmatrix}\beta \widetilde{\mathbf{q}}_1. \label{4.2b}
  \end{align}
\end{subequations}
Substituting \eqref{4.2a} into \eqref{4.2b} gives
$$\mathbf{y}_m^{SG}=\mathbf{y}_m^G-\frac{\beta \widetilde{\mathbf{q}}_1(m+1)}{\mathbf{q}_{m+1}^\top {\mathbf{z}_{m+1}}} R^{-1}_mQ_m^\top \mathbf{z}_{m+1}.$$
Thus, for a given initial guess $x_0$ the following relation characterizes the solutions of GMRES and GMRES-Sh:
\begin{equation} \label{4.3}
   \mathbf{x}_m^{SG}=\mathbf{x}_0+V_m\mathbf{y}_m^{SG}=\mathbf{x}_m^G-\frac{\beta \widetilde{\mathbf{q}}_1(m+1)}{\mathbf{q}_{m+1}^\top {\mathbf{z}_{m+1}}} V_m(\bar{H}_m+\alpha \bar{I})^\dagger  \mathbf{z}_{m+1}.
\end{equation}
Finally, we can establish the relationship between the solutions of the GMRES-Sh and FOM-Sh algorithms. Substituting $x_m^G$ from \ref{4.3} into \ref{yy} yields 
\begin{align}\label{5.6}
  \mathbf{x}_m^{SG}  =&s_m^2\mathbf{x}_{m-1}^{SG}+\frac{s_m^2\beta {\mathbf{q}}_m(1)}{\mathbf{q}_{m}^\top(1:m) \mathbf{z}_{m}} V_{m-1}(\bar{H}_{m-1}+\alpha \bar{I}_{m-1})^\dagger \mathbf{z}_{m} \\& \nonumber -\frac{\beta \mathbf{q}_{m+1}(1)}{\mathbf{q}_{m+1}^\top \mathbf{z}_{m+1}} V_m(\bar{H}_m+\alpha \bar{I}_m)^\dagger \mathbf{z}_{m+1}+c_m^2\mathbf{x}_m^{SF}.
\end{align}
This leads to the following relation for the residual vectors obtained by the two methods,
\begin{align}\label{5.7}
  \mathbf{r}_m^{SG}  =&s_m^2\mathbf{r}_{m-1}^{SG}-\frac{s_m^2\beta {\mathbf{q}}_m(1)}{\mathbf{q}_{m}^\top(1:m) \mathbf{z}_{m}} V_{m}(\bar{H}_{m-1}+\alpha \bar{I}_{m-1})(\bar{H}_{m-1}+\alpha \bar{I}_{m-1})^\dagger \mathbf{z}_{m} \\& \nonumber +\frac{\beta \mathbf{q}_{m+1}(1)}{\mathbf{q}_{m+1}^\top \mathbf{z}_{m+1}} V_{m+1}(\bar{H}_m+\alpha \bar{I}_m)(\bar{H}_m+\alpha \bar{I}_m)^\dagger \mathbf{z}_{m+1}+c_m^2\mathbf{r}_m^{SF}.
\end{align}

Finally, since RandGMRES-Sh and RandFOM-Sh share the same solution framework as the original methods, and since equations (\ref{yy}) and (\ref{yy1}) continue to hold, the relationships between their solutions and residuals remain consistent with those given in (\ref{5.6}) and (\ref{5.7}).

\subsection{Relationship of the Randomized Multi-Shift Algorithms to the Conventional Multi-Shift Algorithm }
We now analyze the relationship between the solutions of the GMRES-Sh and RandGMRES-Sh algorithms established in this work before restart.
Let $V_m^{(RG)}$ and $V_m^{(G)}$ denote the basis vectors generated by the RandGMRES and GMRES algorithms after $m$ iterations, respectively. By Theorem \ref{T4.1}, the basis vectors produced by RandGMRES are equivalent to those generated by GMRES, we can express $V^{(RG)}_m$ as a linear transformation of $V^{(G)}_m$ in the form
\begin{equation}\label{4.5}
V^{(RG)}_m=V^{(G)}_mR,
\end{equation} where $R$ is an upper triangular matrix resulting from the decomposition. Let $\bar{R}=\begin{bmatrix}
 R^\top  &0
\end{bmatrix}^\top$, we may denote 
$ V_{m+1}^{(RG)}=V_{m+1}^{(G)}\bar{R}$
. The Arnoldi relation for RandGMRES and GMRES are given by 
\begin{subequations}
  \begin{align}
    AV_m^{(RG)}&=V_{m+1}^{(RG)}\bar{H}_m^{(RG)} \label{4.6a}\\
    AV_m^{(G)}&=V_{m+1}^{(G)}\bar{H}_m^{(G)} \label{4.6b}
  .\end{align}
\end{subequations}Therefore, based on the relationship between \eqref{4.6a} and \eqref{4.5}, we can obtain \begin{equation}\label{4.7}
AV_m^{(G)}=V_{m+1}^{(G)}\bar{R}\bar{H}_m^{(RG)}R^{-1}.\end{equation}From Equation \eqref{4.7} the relationship between matrix $\bar{H}_m^{(RG)}$ and $\bar{H}_m^{(G)}$ is \begin{equation} \label{4.8}
 \bar{H}_m^{(G)}=\bar{R}\bar{H}_m^{(RG)}R^{-1}. \end{equation}Next, we examine the expressions for the solutions of RandGMRES and GMRES
\begin{subequations}
  \begin{align}
       \mathbf{x}^{(RG)}_{m}&=\mathbf{x}_0+V_m^{(RG)}({{\bar{H}_m}^{(SG)}})^\dagger \left \| \theta \mathbf{r}_0 \right \| \mathbf{e}_1 \label{4.9a}\\
       \mathbf{x}^{(G)}_{m}&=\mathbf{x}_0+V_m^{(G)}({\bar{H}_m^{(G)}})^\dagger \left \|  \mathbf{r}_0 \right \| \mathbf{e}_1 .\label{4.9b}
  \end{align}
\end{subequations}

As previously discussed, based on the relationship between GMRES and GMRES-Sh derived in equation \eqref{4.3}, when GMRES, GMRES-Sh, RandGMRES, and RandG\-MRES-Sh are applied to solve the same shifted system, combining with (\ref{4.3}), the relationship between the solutions of RandGMRES-Sh and GMRES-Sh is given by the following relation
\begin{footnotesize}
\begin{align}
\mathbf{x}^{(SG)}_{m}&=\mathbf{x}^{(SRG)}_{m}+\frac{\left \|  \mathbf{r}_0 \right \| \mathbf{q}^{(G)}_{m+1}(1)}{{\mathbf{q}_{m+1}^{(G)}}^\top \mathbf{z}_{m+1}^{(G)}} V_m^{(G)}(\bar{H}_m^{(G)}+\alpha \bar{I})^\dagger  \mathbf{z}_{m+1}^{(G)}
+V_m^{(G)}({\bar{H}_m^{(G)}})^\dagger \left \|  \mathbf{r}_0 \right \| \mathbf{e}_1 \\& \nonumber -V_m^{(RG)}({\bar{H}_m^{(RG)}})^\dagger \left \| \Theta \mathbf{r}_0 \right \| \mathbf{e}_1-\frac{\left \| \Theta \mathbf{r}_0 \right \| \mathbf{q}^{(RG)}_{m+1}(1)}{{\mathbf{q}_{m+1}^{(RG)}}^\top \mathbf{z}_{m+1}^{(RG)}} V_m^{(RG)}(\bar{H}_m^{(RG)}+\alpha \bar{I})^\dagger  \mathbf{z}_{m+1}^{(RG)}
.\end{align}
\end{footnotesize}
This leads to the following relation for the residual vectors obtained by the two methods,
\begin{footnotesize}
\begin{align}
\mathbf{r}^{(SRG)}_{m}&=\mathbf{r}_{m}^{(SG)}+\frac{\left \|  \mathbf{r}_0 \right \| \mathbf{q}^{(G)}_{m+1}(1)}{{\mathbf{q}_{m+1}^{(G)}}^\top \mathbf{z}_{m+1}^{(G)}} V_{m+1}^{(G)}(\bar{H}_m^{(G)}+\alpha \bar{I})(\bar{H}_m^{(G)}+\alpha \bar{I})^\dagger  \mathbf{z}_{m+1}^{(G)} \\& \nonumber+(A+\alpha I)V_m^{(G)}({\bar{H}_m^{(G)}})^\dagger \left \|  \mathbf{r}_0 \right \| \mathbf{e}_1  -(A+\alpha I)V_m^{(RG)}({\bar{H}_m^{(RG)}})^\dagger \left \| \Theta \mathbf{r}_0 \right \| \mathbf{e}_1\\& \nonumber\frac{\left \| \Theta \mathbf{r}_0 \right \| \mathbf{q}^{(RG)}_{m+1}(1)}{{\mathbf{q}_{m+1}^{(RG)}}^\top \mathbf{z}_{m+1}^{(RG)}} V_{m+1}^{(RG)}(\bar{H}_m^{(RG)}+\alpha \bar{I})(\bar{H}_m^{(RG)}+\alpha \bar{I})^\dagger  \mathbf{z}_{m+1}^{(RG)}
.\end{align}
\end{footnotesize}
The superscript SG stands for the GMRES-Sh method, SRG for the RandGMRES-Sh method, G for the GMRES method, and RG for the RandGMRES method.

Next, we derive the relationship between the solutions of the RandFOM-Sh and FOM-Sh algorithms. It has been demonstrated by Simoncini V that in the FOM-Sh algorithm, the solutions of the seed system and the shifted systems are naturally collinear. This implies that the shifted systems essentially perform the FOM procedure. Likewise, the RandFOM-Sh algorithm effectively executes the RandFOM method when solving shifted systems. Both the RandFOM and FOM algorithms preserve the Arnoldi relations \eqref{4.6a} and \eqref{4.6b}. We denote
\begin{subequations}
  \begin{align}
       \bar{H}_m^{(G)}&=\begin{pmatrix}
{H}_m^{(F)} \\
\mathbf{h}_{m+1}^{F}
\end{pmatrix}\\
       \bar{H}_m^{(RG)}&=\begin{pmatrix}
{H}_m^{(RF)} \\
\mathbf{h}_{m+1}^{RF}
\end{pmatrix}
  \end{align}
\end{subequations}
From \eqref{4.8} we can derive the relationship 
\begin{equation}
{H}_m^{(F)}=(R{H}_m^{(RF)}+\mathbf{r}(1:m)\mathbf{h}^{(RF)}_{m+1})R^{-1}
\end{equation}\label{5.17}
 from Equations (5.16a) and (5.16b).
Next, from equation 
\begin{subequations}
  \begin{align}
       {H}_m^{(F)}\mathbf{y}_1 & =\left \| \mathbf{r}_0 \right \| \mathbf{e}_1 \\
       {H}_m^{(RF)}\mathbf{y}_2 & =\left \| \Theta \mathbf{r}_0 \right \|\mathbf{e}_1 
  \end{align}
\end{subequations} and \ref{5.17} solved by the FOM and RandFOM algorithms, we infer equation
\begin{subequations}
  \begin{align}
      \mathbf{y}_1&=R(R{H}_m^{(RF)}+\mathbf{r}(1:m)h^{(RF)}_{m+1})^{\dagger }\left \| \Theta \mathbf{r}_0 \right \| \mathbf{e}_1 \\
      \mathbf{y}_2&=({{H}_m^{(RF)}})^{-1}\left \| \mathbf{r}_0 \right \| \mathbf{e}_1  .
  \end{align}
\end{subequations}
We ultimately derive that the relationship between the solutions of RandFOM-Sh and FOM-Sh is given by relation
\begin{equation}
\mathbf{x}^{S-RF}_{m}-\mathbf{x}^{S-F}_{m}=V_m^{G}R({{H}_m^{(RF)}})^{-1}\left \| \mathbf{r}_0 \right \| \mathbf{e}_1  - V_m^{G}R(R{H}_m^{(RF)}+\mathbf{r}(1:m)h^{(RF)}_{m+1})^{\dagger }\left \| \Theta \mathbf{r}_0 \right \| \mathbf{e}_1.
\end{equation}
This leads to the following relation for the residual vectors obtained by the two methods,
\begin{align}
\mathbf{r}^{S-F}_{m}-\mathbf{r}^{S-RF}_{m}=&(A+\alpha I)V_m^{G}R({{H}_m^{(RF)}})^{-1}\left \| \mathbf{r}_0 \right \| \mathbf{e}_1  \\&\nonumber- (A+\alpha I)V_m^{G}R(R{H}_m^{(RF)}+\mathbf{r}(1:m)h^{(RF)}_{m+1})^{\dagger }\left \| \Theta \mathbf{r}_0 \right \| \mathbf{e}_1
\end{align}
\section{Numerical experiments}\label{Sec5}
This section presents a comparative performance analysis between the proposed RandGMRES-Sh, RandFOM-Sh, Rand\-GMRES-DR-Sh and RandFOM-DR-Sh algorithms and their standard counterparts
\begin{itemize}
\item[$\bullet$] Restarted Shifted GMRES (GMRES-Sh)  \cite{3}
\item[$\bullet$] Restarted Shifted FOM (FOM-Sh)  \cite{4}
\item[$\bullet$] Deflated GMRES with multiple shifts (GMRES-DR-Sh) \cite{5}
\item[$\bullet$] Deflated FOM with multiple shifts (FOM-DR-Sh) \cite{31}
\end{itemize}
across a range of numerical problems. All reported results are based on the SRFT (Subsampled Randomized Fourier Transform) sketching distribution and the size of the sketching matrix is uniformly set to $s=500$.
The numerical experiments were conducted on a workstation equipped with an AMD R7 4800H CPU running at 2.90 GHz and 16 GB of RAM, under the Windows 10 operating system. All implementations and tests were carried out in MATLAB R2021b.
To evaluate the practical efficiency of the proposed methods, the following sections provide detailed applications and comparative studies focusing on three representative problems: General Test problems, PageRank problem and Quantum Chromodynamics (QCD), The primary dataset used in this study is shown in Table \ref{tb5.1}.
\begin{table}[htbp]
\centering
\caption{Set and characteristics of the test matrix problems used in our experiments}
\label{tb5.1}
\scalebox{0.8}{\begin{tabular}{l l l l l }
\hline Example & Name & Size & Nonzeros & Problem kind \\
\bottomrule[1pt] 
 1 & SiO2 & 	155,331 & 11,283,503 & Theoretical/Quantum Chemistry Problem \\
 2 & torso3 & 	259,156 & 4,429,042 & 2D/3D Problem \\
 3 & CO & 	221,119 & 7,666,057 & Theoretical/Quantum Chemistry Problem \\
 4 & FEM\underline{~}3D\underline{~}thermal2 & 147,900 & 3,489,300 & Thermal problem \\
 5 & vfem & 93,476 & 1,434,636 & Electromagnetics Problem \\
 6 & Dubcova3 & 146,689 & 3,636,643 & 2D/3D Problem \\
 7 & eu-2005 & 862,664 & 19,235,140 & Directed Graph \\
 8 & Web-Google & 916,428 & 5,105,039 & Directed Graph \\
 9 & netherlands\underline{~}osm & 2,216,688 & 4,882,476 & Directed Graph \\
 10 & web-edu & 9,845,725 & 57,156,537 & Directed Graph \\
 11 & conf5\underline{~}4\underline{~}00l8x8\underline{~}1500 & 	49,152 & 1,916,928 & Theoretical/Quantum Chemistry Problem \\
 12 & conf5\underline{~}4-8x8-05 & 	49,152 & 1,916,928 & Theoretical/Quantum Chemistry Problem \\
 13 & conf5\underline{~}4-8x8-15 & 	49,152 & 1,916,928 & Theoretical/Quantum Chemistry Problem \\
\hline
\end{tabular}}
\end{table}

\subsection{General Test Problems}
The first set of matrices used in Examples 1, 2, 3 are extracted from the Sparse Matrix Collection \cite{28} available at the University of Florida, described in Table6.1. In our experiments, the set of shifts $\sigma =[0.0001,0.001,0.0\-1,0.1,1]$ and we deflated $k=10$ harmonic Ritz vectors. We set the maximum dimension of the search space equal to $m=50$, and $tol =10^{-6}$ and we used $maxit =10000$ for the stopping criterion parameters. 

We compare the convergence behavior, true residual norms, and CPU time of the four shifted algorithms proposed in this work with those of the four algorithms introduced in references \cite{3,4,5}.
\begin{figure}
  \centering
  \includegraphics[width=0.7\textwidth]{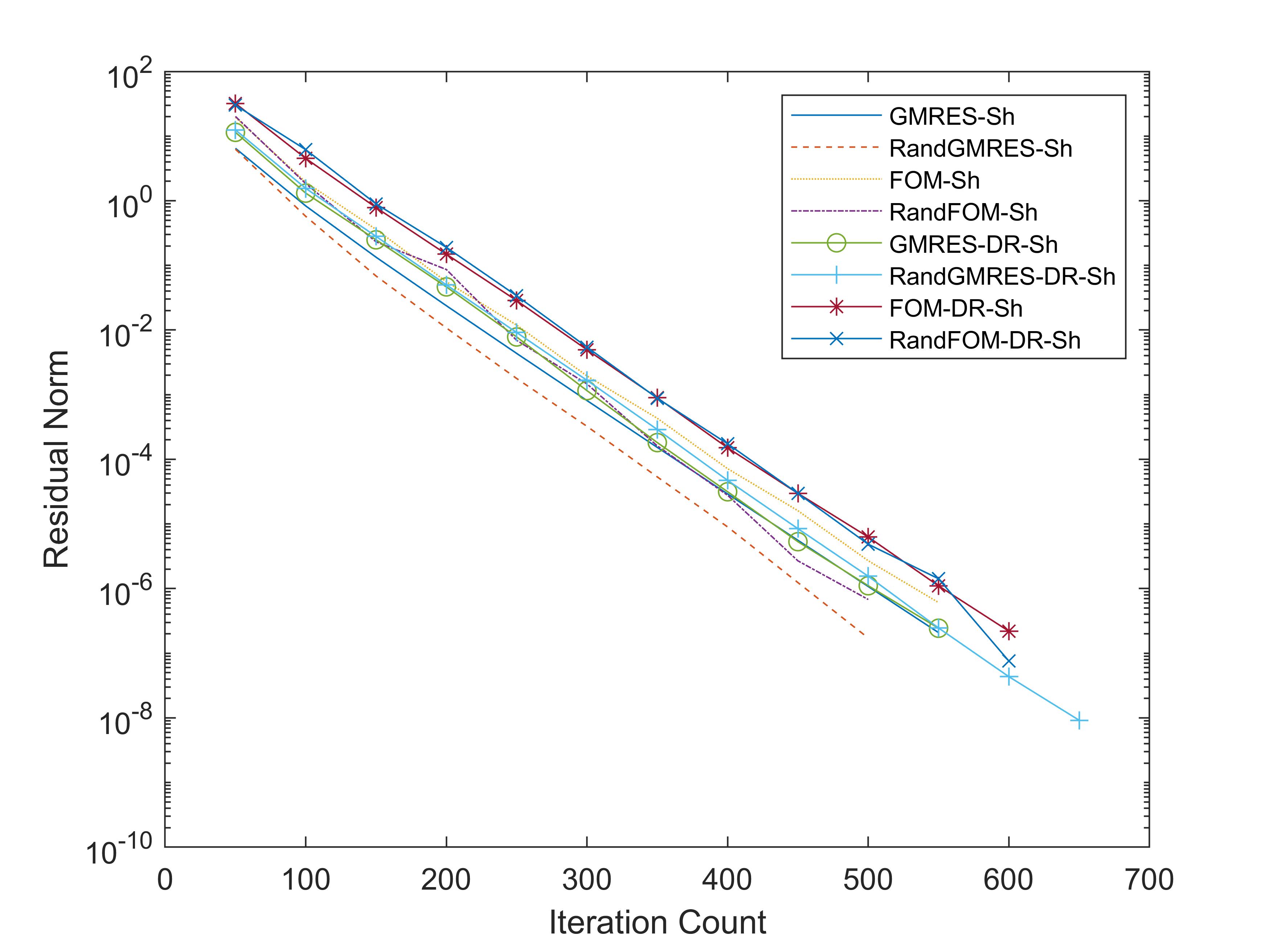}
  \caption{Convergence curve of dataset FEM\underline{~}3D\underline{~}thermal2}\label{p1}
\end{figure}
The large-scale sparse linear systems presented in the table-drawn from thermal problems, computational fluid dynamics, and electromagnetics are typically derived from the discretization of partial differential equations \cite{29,30}. In such engineering simulations, multi-scenario parameter analysis is often required, such as varying heat sources in thermal problems, diverse physical conditions in fluid dynamics, or multi-frequency excitations in electromagnetics. The shifted algorithm serves as an efficient iterative solver whose key strength lies in its ability to simultaneously handle families of linear systems with multiple right-hand sides or slight parametric variations (i.e., "shifts"). By reusing the core iterative process, the method avoids solving each parametric scenario independently, thereby significantly enhancing computational efficiency in applications such as parameter sweeps, optimization design, and uncertainty quantification. Figure \ref{p1} illustrates the convergence curves of several algorithms for matrix FEM\underline{~}3D\underline{~}thermal2 under a fixed shift.
\begin{table}[H]
\small
  \centering
  \caption{Numerical experiments on general test problems, tol=1e-6, Inner Iterations=50}%
  \label{tb5.2}
  \scalebox{0.8}{
    \begin{tabular}{p{2.7cm}|p{4.7cm}|p{1.7cm}|p{1.7cm}}
     \hline
      Datasets   &Method&CPU(s)&Outer Iterations
    \\ \hline
    
    \multirow{8}{*}{\tt FEM\underline{~}3D\underline{~}thermal2}
     & GMRES-sh\newline FOM-sh
     & 16.60\newline 15.47
     & 11\newline 11
    \\ \cline{2-2}
     & \textbf{RandGMRES-sh}\newline \textbf{RandFOM-sh}
     & \textbf{5.65}\newline \textbf{5.90}
     & \textbf{10}\newline \textbf{11}
    \\ \cline{2-4}
     & GMRES-DR-sh\newline FOM-DR-sh
     & 14.31\newline 13.81
     & 11\newline 11
    \\ \cline{2-2}
     & \textbf{RandGMRES-DR-sh}\newline \textbf{RandFOM-DR-sh}
     & \textbf{5.20}\newline \textbf{5.86}
     & \textbf{11}\newline \textbf{12}
     \\ \hline

   \multirow{8}{*}{\tt vfem}
     & GMRES-sh\newline FOM-sh
     & 12.13\newline 12.69
     & 4\newline 4
    \\ \cline{2-2}
     & \textbf{RandGMRES-sh}\newline \textbf{RandFOM-sh}
     & \textbf{7.04}\newline \textbf{7.13}
     & \textbf{4}\newline \textbf{4}
    \\ \cline{2-4}
     & GMRES-DR-sh\newline FOM-DR-sh
     & 13.35\newline 13.74
     & 5\newline 5
    \\ \cline{2-2}
     & \textbf{RandGMRES-DR-sh}\newline \textbf{RandFOM-DR-sh}
     & \textbf{7.41}\newline \textbf{7.31}
     & \textbf{5}\newline \textbf{5}
     \\ \hline

      \multirow{8}{*}{\tt Dubcova3}
     & GMRES-sh\newline FOM-sh
     & 27.93\newline 20.38
     & 4\newline 4
    \\ \cline{2-2}
     & \textbf{RandGMRES-sh}\newline \textbf{RandFOM-sh}
     & \textbf{10.45}\newline \textbf{9.74}
     & \textbf{4}\newline \textbf{4}
    \\ \cline{2-4}
     & GMRES-DR-sh\newline FOM-DR-sh
     & 9.84\newline 11.05
     & 5\newline 5
    \\ \cline{2-2}
     & \textbf{RandGMRES-DR-sh}\newline \textbf{RandFOM-DR-sh}
     & \textbf{5.57}\newline \textbf{5.62}
     & \textbf{5}\newline \textbf{5}
   \\ \hline
    \end{tabular}}
\end{table}%

\subsection{PageRank Problem}
The PageRank algorithm was first created to measure how important webpages are. It changes hyperlink networks into directed graphs and calculates eigenvectors through repeated processes to get the global ranking of webpages. As a basic tool for network analysis, this algorithm also gives a common example for studying linear algebra problems, such as sparse matrix iteration and eigenvalue computation. In this paper, we test how well the proposed numerical algorithm works on sparse matrices and large datasets by solving PageRank problems.
\begin{table}[H]
\small
  \centering
  \caption{Numerical experiments on the Pagerank problem for some data sets, tol=1e-8, Inner Iterations=50}%
  \label{b5.2}
    \scalebox{0.8}{\begin{tabular}{p{3.7cm}|p{4.7cm}|p{1.7cm}|p{1.7cm}}
     \hline
      Datasets   &Method&CPU(s)&Outer Iterations
    \\ \hline
    \multirow{8}{*}{\tt eu-2005}
    & GMRES-sh\newline FOM-sh
    & 90.25\newline 77.90
    & 11\newline 8
    \\ \cline{2-2}
    & \textbf{RandGMRES-sh}\newline \textbf{RandFOM-sh}
    & \textbf{51.14}\newline \textbf{43.90}
    & \textbf{11}\newline \textbf{9}
    \\ \cline{2-4}
    & GMRES-DR-sh\newline FOM-DR-sh
    & 61.12\newline 65.55
    & 8\newline 8
    \\ \cline{2-2}
    & \textbf{RandGMRES-DR-sh}\newline \textbf{RandFOM-DR-sh}
    & \textbf{32.02}\newline \textbf{31.01}
    & \textbf{8}\newline \textbf{8}
    \\ \hline

   \multirow{8}{*}{\tt Web-Google}
   & GMRES-sh\newline FOM-sh
   & 49.13\newline 58.74
   & 5\newline 5
   \\ \cline{2-2}
   & \textbf{RandGMRES-sh}\newline \textbf{RandFOM-sh}
   & \textbf{27.97}\newline \textbf{35.89}
   & \textbf{5}\newline \textbf{5}
   \\ \cline{2-4}
   & GMRES-DR-sh\newline FOM-DR-sh
   & 54.64\newline 53.84
   & 6\newline 6
   \\ \cline{2-2}
   & \textbf{RandGMRES-DR-sh}\newline \textbf{RandFOM-DR-sh}
   & \textbf{34.02}\newline \textbf{35.06}
   & \textbf{6}\newline \textbf{6}
    \\ \hline

   \multirow{8}{*}{\tt netherlands\_osm}
   & GMRES-sh\newline FOM-sh
   & 40.11\newline 45.27
   & 2\newline 2
   \\ \cline{2-2}
   & \textbf{RandGMRES-sh}\newline \textbf{RandFOM-sh}
   & \textbf{24.17}\newline \textbf{28.63}
   & \textbf{2}\newline \textbf{2}
   \\ \cline{2-4}
   & GMRES-DR-sh\newline FOM-DR-sh
   & 50.93\newline 51.63
   & 3\newline 3
   \\ \cline{2-2}
   & \textbf{RandGMRES-DR-sh}\newline \textbf{RandFOM-DR-sh}
   & \textbf{29.23}\newline \textbf{28.91}
   & \textbf{3}\newline \textbf{3}
    \\ \hline

   \multirow{8}{*}{\tt web-edu}
   & GMRES-sh\newline FOM-sh
   & 890.50\newline 710.87
   & 10\newline 8
   \\ \cline{2-2}
   & \textbf{RandGMRES-sh}\newline \textbf{RandFOM-sh}
   & \textbf{405.90}\newline \textbf{545.93}
   & \textbf{7}\newline \textbf{8}
   \\ \cline{2-4}
   & GMRES-DR-sh\newline FOM-DR-sh
   & 686.20\newline 740.74
   & 8\newline 8
   \\ \cline{2-2}
   & \textbf{RandGMRES-DR-sh}\newline \textbf{RandFOM-DR-sh}
   & \textbf{393.23}\newline \textbf{335.20}
   & \textbf{8}\newline \textbf{8}
    \\ \hline
    \end{tabular}}%
\end{table}%

We obtain the network link matrix $A$, then perform column normalization to derive matrix $Q$. Denoting the Google matrix as $G$, the modified PageRank formulation $G\mathbf{r}=\mathbf{r}$ \cite{25} is guaranteed to possess a unique positive solution, which is equivalent to solving the linear system denoted as problem $$(I-\alpha Q)\mathbf{r}=\frac{\mathbf{e}}{n}.$$

In many applications, we often encounter PageRank problems involving multiple damping factors. For instance, computing multiple PageRank vectors with different $\alpha$ values appears promising for designing anti-spam mechanisms. Furthermore, the Randomized alpha PageRank (RAPr) model requires solving a series of deterministic PageRank problems with numerous damping factors approaching 1 \cite{10}.

Table \ref{b5.2} compares the performance of eight algorithms for solving PageRank problems. The parameter $\alpha$ in the table represents different shift parameters, with the target accuracy set to $10^{-6}$ and the number of inner iterations fixed at 50. The decimal values listed under each shift parameter denote the actual accuracy achieved, all computed using the 1-norm of the true residual. The results clearly show that using the randomized algorithm reduces computational time while maintaining the same accuracy level. For easier observation, Figure \ref{t5.1} is presented in this work, using the "eu-2005" matrix with a damping factor of 0.99 as an example. The inner curves show how the residual norms change during the iteration processes of different algorithms when solving multiple shifted systems.

\begin{figure}[H]\caption{Convergence Behavior of Algorithms under Different $\alpha$ Values in eu-2005 }
\label{t5.1}
\centering
\begin{minipage}{0.49\linewidth}
\centering
  \includegraphics[width=1\textwidth]{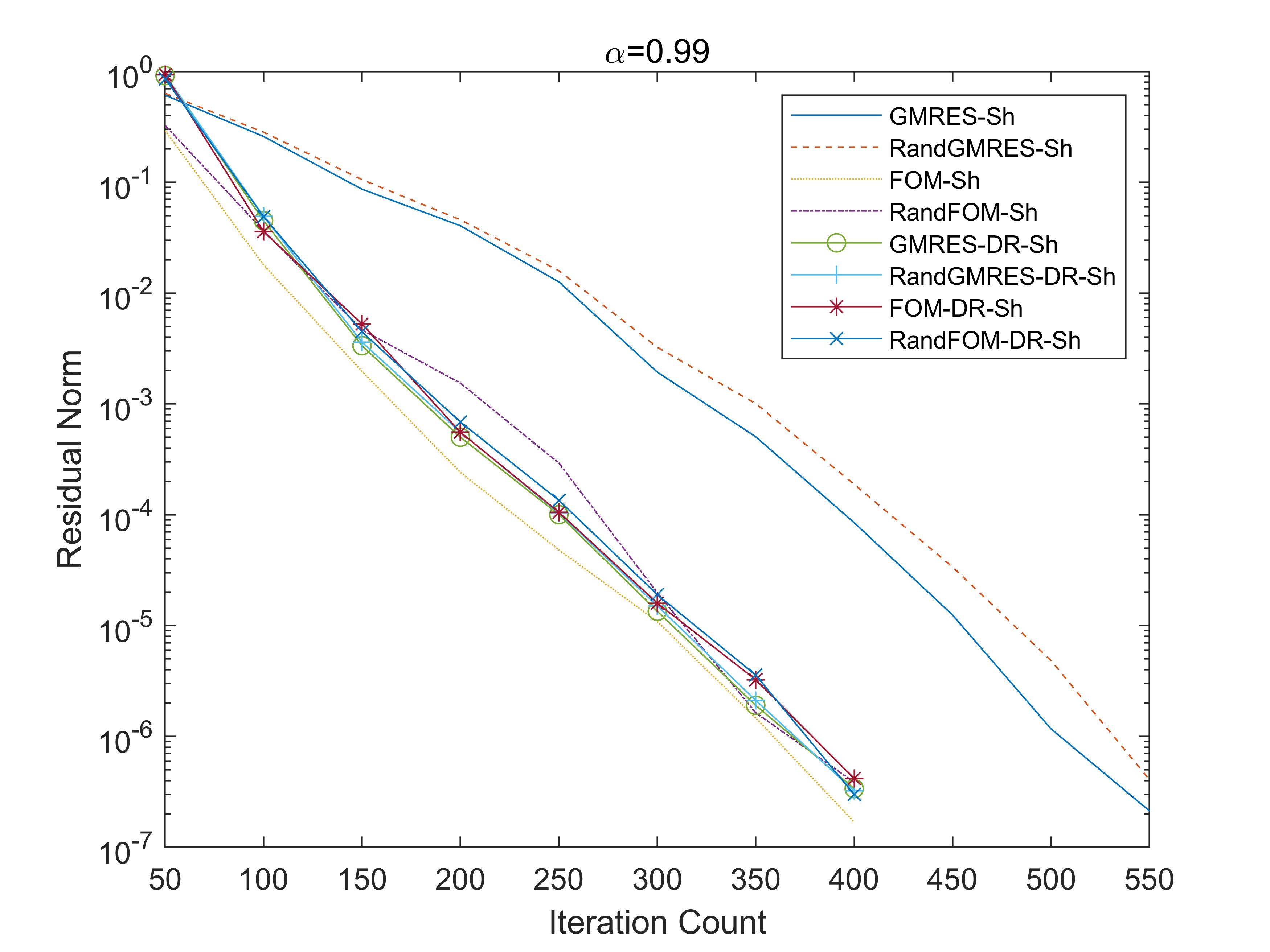}

\end{minipage}
\begin{minipage}{0.49\linewidth}
\centering
  \includegraphics[width=1\textwidth]{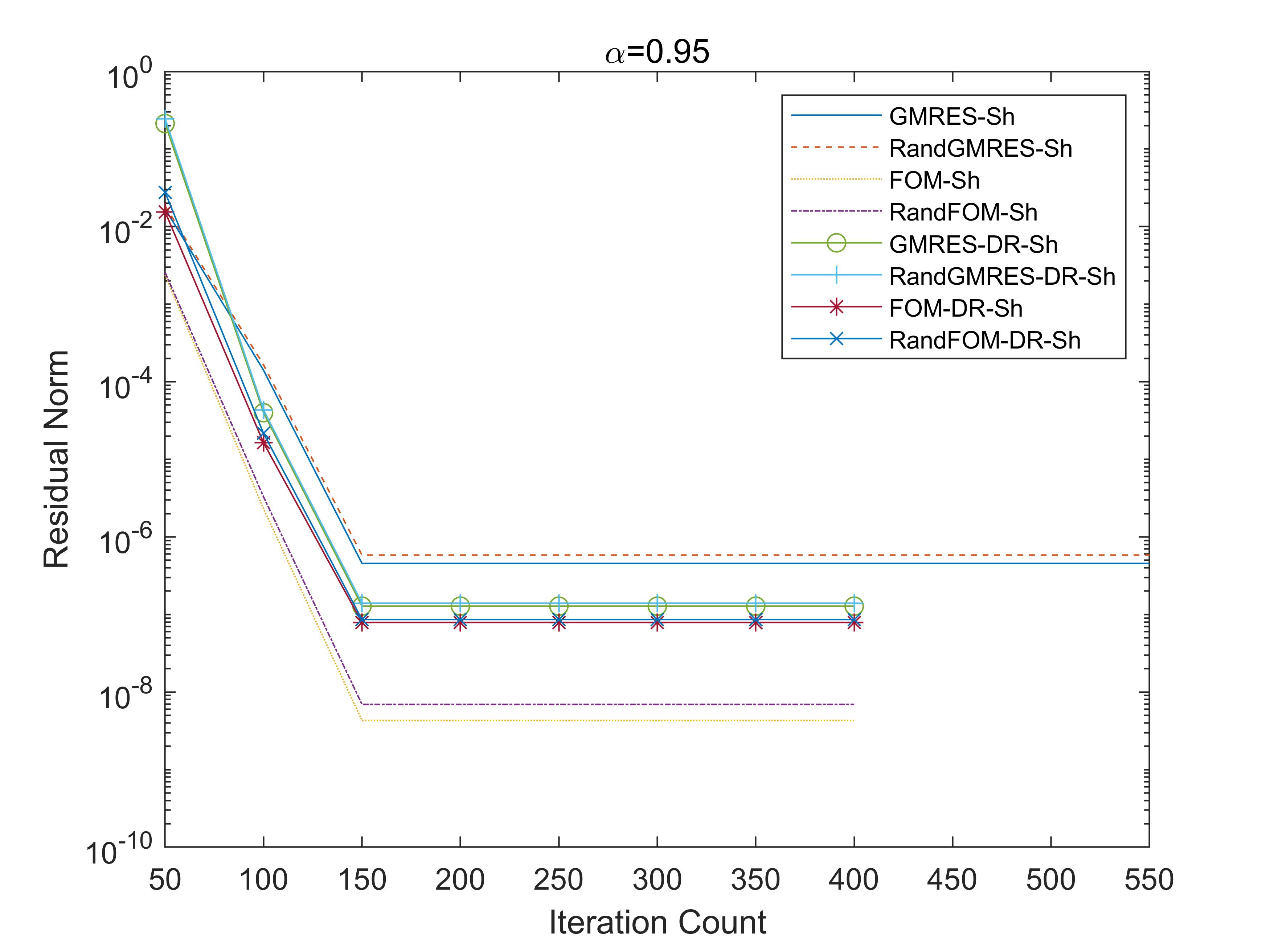}
 
\end{minipage}
\qquad

\begin{minipage}{0.49\linewidth}
\centering
  \includegraphics[width=1\textwidth]{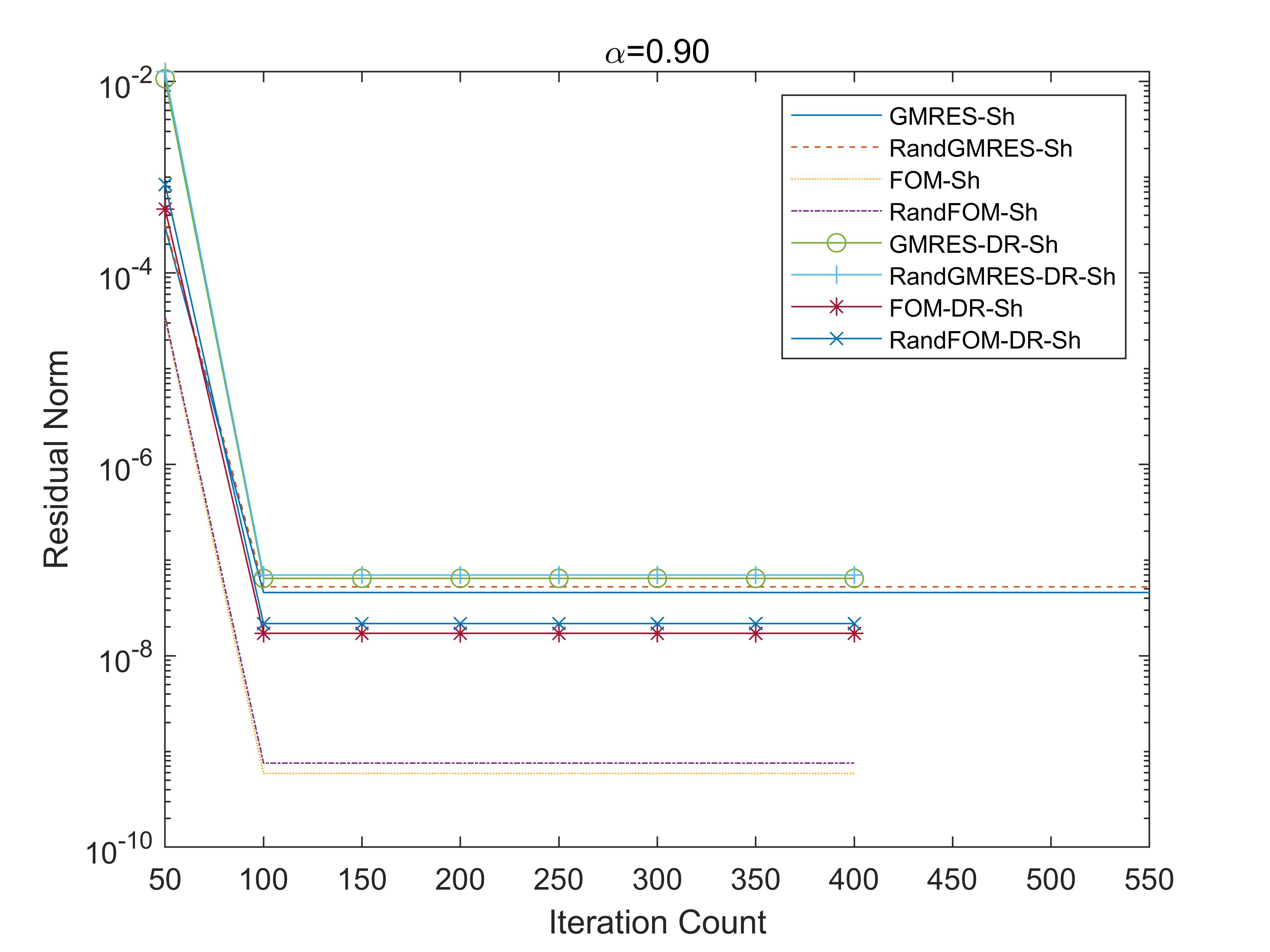}

\end{minipage}
\begin{minipage}{0.49\linewidth}
\centering
  \includegraphics[width=1\textwidth]{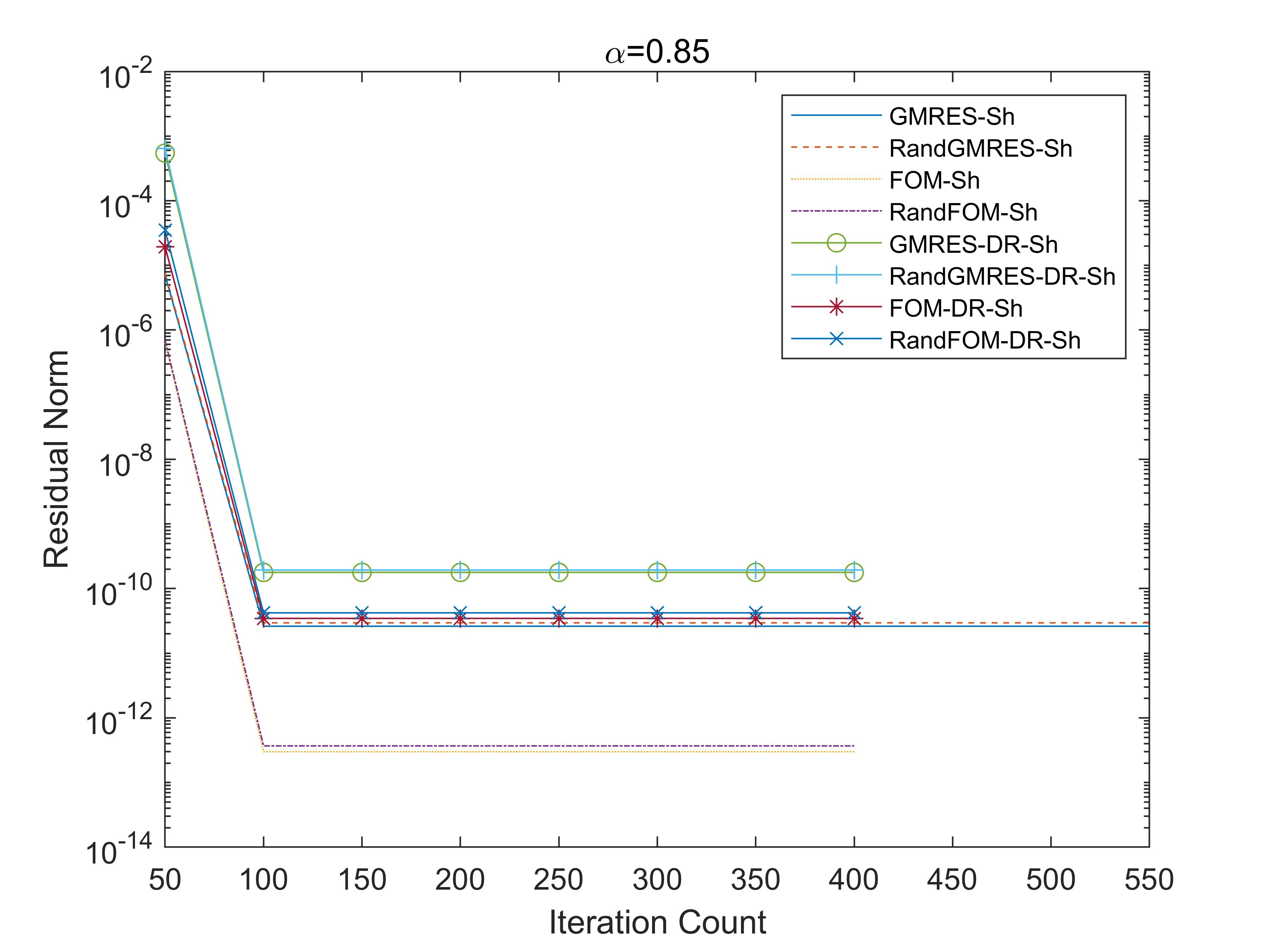}

\end{minipage}
\end{figure}

\subsection{Quantum Chromodynamics Problem}
Quantum Chromodynamics, \\the theory that describes the strong interaction between quarks and gluons, is a classic example in physics for solving "multi-shift equations." Here, "shift" can be understood as the dynamic variables of quantum fields in spacetime, and the "equations" refer to highly nonlinear equations of motion, such as the Yang-Mills equations. Because of phenomena like quark confinement and the strong interaction strength in the low-energy range, analytical solutions are almost impossible to get. This makes it an inherently infinite-dimensional, strongly coupled complex system. To solve this problem, physicists developed a numerical method called "lattice QCD."
The shifted problem can be stated as solving a set of shifted linear systems \cite{3,5,11} $$(D+\alpha_iI)\mathbf{x}_i=\mathbf{b},$$ where all systems use the same matrix $D$. In lattice QCD, the Dirac operator $D$ is usually large-scale, sparse, complex, and non-Hermitian. If we use iterative methods (such as CG or GMRES) separately for each shift parameter $\alpha_i$, the computational cost will be too high to afford. The shifted method solves this problem by making use of the fact that Krylov subspaces do not change under shift transformations. This method allows us to generate basis vectors that can be shared, so we can approximate solutions for all shifted systems at the same time in a single iterative process. This greatly reduces the computational burden.
Instead of giving the parameter $\sigma_i$ directly, it is of common use in QCD to write down its inverse $k_i=\frac{1}{\alpha_i}$.  In our experiments we use values
 of $m$ corresponding to the three values \cite{3}
 $$k_1=0.17,\ k_2=0.176,\ k_3=0.18.$$
 
 The matrices presented in the Table \ref{b5.3} are sourced from MatrixMarket, all being complex non-Hermitian matrices of order 49,152. The parameter $k$ in the table represents different shift parameters, with a target accuracy set to $10^{-8}$, and the number of inner iterations is set to 50. The decimal values listed below each shift parameter indicate the actual achieved accuracy, all computed using the 2-norm of the true residual.

\begin{table}[H]
\small
  \centering
  \caption{Numerical experiments on the QCD problem for some data sets, tol=1e-8, Inner Iterations=50}%
  \label{b5.3}
    \begin{tabular}{p{3.7cm}|p{4.0cm}|p{1.7cm}|p{1.7cm}}
     \hline
      Datasets   &Method&CPU(s)
      &Outer Iterations
    \\ \hline
    \multirow{8}{*}{\tt conf5\_4-8x8-15}
    & GMRES-sh\newline FOM-sh
    & 23.01\newline 40.05
    & 32\newline 54
    \\ \cline{2-2}
    & \textbf{RandGMRES-sh}\newline \textbf{RandFOM-sh}
    & \textbf{19.54}\newline \textbf{34.11}
    & \textbf{34}\newline \textbf{49}
    \\ \cline{2-4}
    & GMRES-DR-sh\newline FOM-DR-sh
    & 10.61\newline 12.23
    & 12\newline 13
    \\ \cline{2-2}
    & \textbf{RandGMRES-DR-sh}\newline \textbf{RandFOM-DR-sh}
    & \textbf{8.44}\newline \textbf{8.06}
    & \textbf{13}\newline \textbf{13}
    \\ \hline

   \multirow{8}{*}{\tt conf5\_4-8x8-05}
   & GMRES-sh\newline FOM-sh
   & 25.05\newline 34.02
   & 36\newline 45
   \\ \cline{2-2}
   & \textbf{RandGMRES-sh}\newline \textbf{RandFOM-sh}
   & \textbf{22.15}\newline \textbf{25.91}
   & \textbf{40}\newline \textbf{40}
   \\ \cline{2-4}
   & GMRES-DR-sh\newline FOM-DR-sh
   & 10.27\newline 11.73
   & 12\newline 13
   \\ \cline{2-2}
   & \textbf{RandGMRES-DR-sh}\newline \textbf{RandFOM-DR-sh}
   & \textbf{7.18}\newline \textbf{8.11}
   & \textbf{15}\newline \textbf{13}
    \\ \hline

   \multirow{8}{*}{\tt conf5\_4\_00l8x8\_1500} 
   & GMRES-sh\newline FOM-sh
   & 22.86\newline 41.79
   & 33\newline 53
   \\ \cline{2-2}
   & \textbf{RandGMRES-sh}\newline \textbf{RandFOM-sh}
   & \textbf{19.04}\newline \textbf{33.51}
   & \textbf{34}\newline \textbf{46}
   \\ \cline{2-4}
   & GMRES-DR-sh\newline FOM-DR-sh
   & 9.87\newline 12.52
   & 15\newline 13
   \\ \cline{2-2}
   & \textbf{RandGMRES-DR-sh}\newline \textbf{RandFOM-DR-sh}
   & \textbf{7.64}\newline \textbf{8.18}
   & \textbf{16}\newline \textbf{13}
    \\ \hline
    \end{tabular}%
\end{table}%
Our experimental results show limited performance improvement, which we attribute to the relatively small matrix dimension (below 50,000) used in the tests-likely constrained by the maximum available data scale for this category. Furthermore, the extent of improvement varies across matrices of different types and sizes. Figure \ref{p3} illustrates the convergence curves of several algorithms for matrix FEM\underline{~}3D\underline{~}thermal2 under a fixed shift.
\begin{figure}
  \centering
  \includegraphics[width=0.8\textwidth]{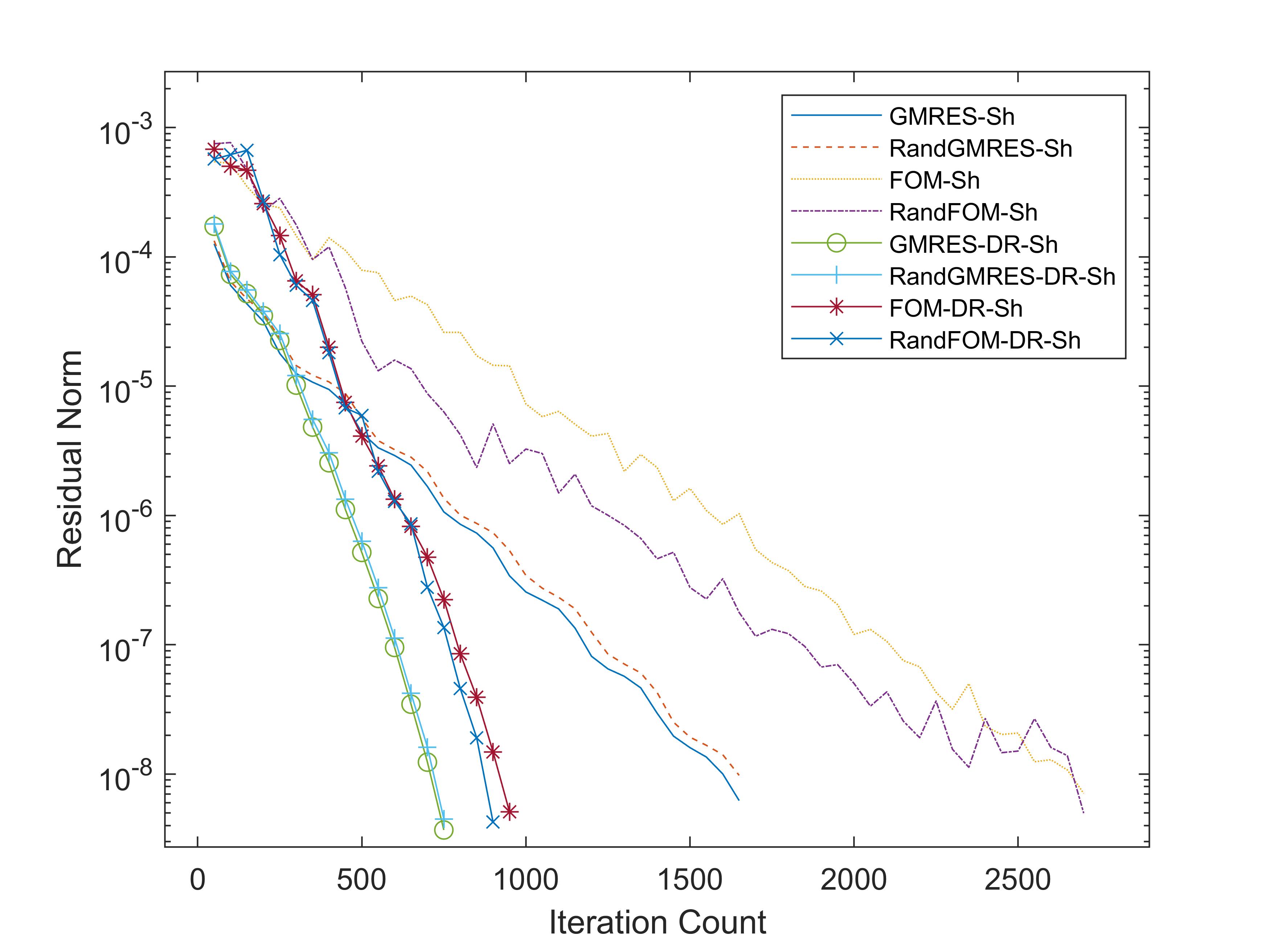}
  \caption{Convergence curve of dataset conf5\_4\_00l8x8\_1500}\label{p3}
\end{figure}

\subsection{Large-Scale Matrix Exponential Computation Problem}
In this section, we focus on the numerical approximation of the matrix exponential applied to a single vector 
\begin{equation}\label{E1}
  Z(t)=\exp(A)\mathbf{b},
\end{equation} where $t \in \mathbb{R}$ is a fixed constant, $tA \in \mathbb{R}^{n \times n}$ is an n-by-n large matrix with $\left \| tA \right \|  \gg 1$, and $\mathbf{b} \in \mathbb{R}^{n }$. Computing (\ref{E1}) is an important part of calculating exponential integrators, time-dependent partial differential equations \cite{34}, and approximating dynamical systems, among other things \cite{33}. We solve the problem of computing the matrix exponential acting on a single vector, $\exp(tA)\mathbf{b}$, using a rational approximation framework together with an efficient iterative solver. 
This method has three main parts: deriving a rational approximation, rewriting it into a numerical problem that is easier to solve, and developing a special iterative solver. Our approach is based on approximating the exponential function $e^x$ on the negative real axis using a rational function derived from the Carath{\'e}odory-Fej{\'e}r (CF) method \cite{35}. This technique gives a near-best rational approximation in the form $$e^x\approx \tilde{f} ^*(x)=w_0+\sum_{i=1}^{v} \frac{w_i}{x-\tau _i}, $$ where $\{\tau _i  \}^v_{i=1}\subset \mathbb{C}$ are the poles and $\{w_i \}^v_{i=1}\subset \mathbb{C}$ are the corresponding residues of the CF approximation of type(v,v). For a MATLAB function to compute the poles and residues of the CF method for the type(v,v) near-best approximation, we refer to \cite{35}. The order $v$ is typically chosen as 14. Consequently, the action of the matrix exponential is approximated by:
\begin{equation}\label{E2}
  exp(A)b\approx w_0\mathbf{b}+\sum_{i=1}^{v}  w_i(A-\tau _iI)^{-1}\mathbf{b}.
\end{equation}
We rewrite (\ref{E2}) as
\begin{equation}\label{E3}
  exp(A)b\approx w_0\mathbf{b}+A^{-1}\sum_{i=1}^{v}  w_i(I-\tau _iA^{-1})^{-1}\mathbf{b},
\end{equation}
 instead of solving (\ref{E2}), we propose to solve the following alternatively shifted linear systems with single right-hand sides $$(I-\tau _iA^{-1})\mathbf{x}_i=\mathbf{b},\ i=1,2,\dots,l,$$  and then collect the sum terms in (\ref{E3}) to get an approximation to $exp(A)B$.
\begin{table}[H]
\setlength{\tabcolsep}{30pt}
\centering
\caption{The first experiment, tol=1e-6, Inner Iterations=50}
\label{tb5.1111}
\scalebox{0.9}{\begin{tabular}{p{3.9cm}|p{1.7cm}|p{1.7cm}}
\hline  Method& CPU &Outer Iterations\\
\bottomrule[1pt] 
 GMRES-Sh & -  &-\\
 \textbf{RandGMRES-Sh} & \textbf{-} &\textbf{-}\\
 FOM-Sh & 	131.01s  &29 \\
 \textbf{RandFOM-Sh} & \textbf{44.91s}  &\textbf{19}\\
 GMRES-DR-Sh & -   &-\\
 \textbf{RandGMRES-Sh-DR} & \textbf{-}  &\textbf{-}\\
 FOM-DR-Sh & 113.08s  &26\\
 \textbf{RandFOM-DR-Sh} & \textbf{48.16s}  &\textbf{21}\\
\hline
\end{tabular}}
\end{table}
The first experiment is taken from \cite{31} and the matrix is provided by Al-Mohy and Higham \cite{32}; it is the block tridiagonal matrix $A = -2500 \times \beta \times \text{gallery}('poisson',396)$, where $A \in \mathbb{R}^{156816 \times 156816}$ is a multiple of the standard finite difference discretization of the 2D Laplacian with $\beta=0.02$. We aim to compute $e^{A}\mathbf{b}$ with the right-hand side $\mathbf{b} =\text{randn}(156816,1)$, but we focus only on the single right-hand side problem here. Table \ref{tb5.1111} presents the numerical results.

The second one is a variation of the example provided in \cite{40}
\\
\\
 D2=(N+1)/2*gallery(tridiag, N);\\
 I=speye(N);\\
 D2=kron(I,D2) +kron(D2, I);\\
 o=ones(N, 1);\\
 D1=(N+1)/2*spdiags([-o, 0*o,o],-1:1,N,N);\\
 D1 =kron(I, D1) +kron(D1, I);\\
 A =D2+v*D1;\\
 B=randn(N2, 1);\\
 
The matrices A correspond to the standard finite difference discretization of a 2D convection-diffusion equation on the unit square. This equation has a constant convection field and different convection parameters v. Here, we set $N=300$. Table \ref{tb5.11111} presents the numerical results.

The parameter $\tau$ represents different shift parameters, with the target accuracy set to $10^{-6}$ and the number of inner iterations fixed at 50. Note that the symbol '-' in the result tables means the method did not converge within 500 restarts. From the results in Tables \ref{tb5.1111} and \ref{tb5.11111}, we can see that GMRES-Sh and its variants failed to converge for both types of problems. This shows the inherent limitations of the GMRES-type framework when solving shifted linear systems. Figure \ref{tbep} illustrates the convergence curves of several algorithms for the second problem in large-scale matrix exponential computation under a fixed shift.
\begin{table}[H]
\setlength{\tabcolsep}{30pt}
\centering
\caption{The second experiment, tol=1e-6, Inner Iterations=50}
\label{tb5.11111}
\scalebox{0.9}{\begin{tabular}{p{3.9cm}|p{1.7cm}|p{1.7cm}}
\hline  Method & CPU&Outer Iterations\\
\bottomrule[1pt] 
 GMRES-Sh & -  &- \\
 \textbf{RandGMRES-Sh} & \textbf{-}  &\textbf{-} \\
 FOM-Sh & 	552.52s  & 189\\
 \textbf{RandFOM-Sh} & \textbf{105.57s}  &\textbf{63}\\
 GMRES-DR-Sh & -  &- \\
 \textbf{RandGMRES-Sh-DR} & \textbf{-}  &\textbf{-} \\
 FOM-DR-Sh & 85.67s  &29 \\
 \textbf{RandFOM-DR-Sh} & \textbf{40.48s}  &\textbf{26} \\
\hline
\end{tabular}}
\end{table}
\begin{figure}
  \centering
  \includegraphics[width=0.7\textwidth]{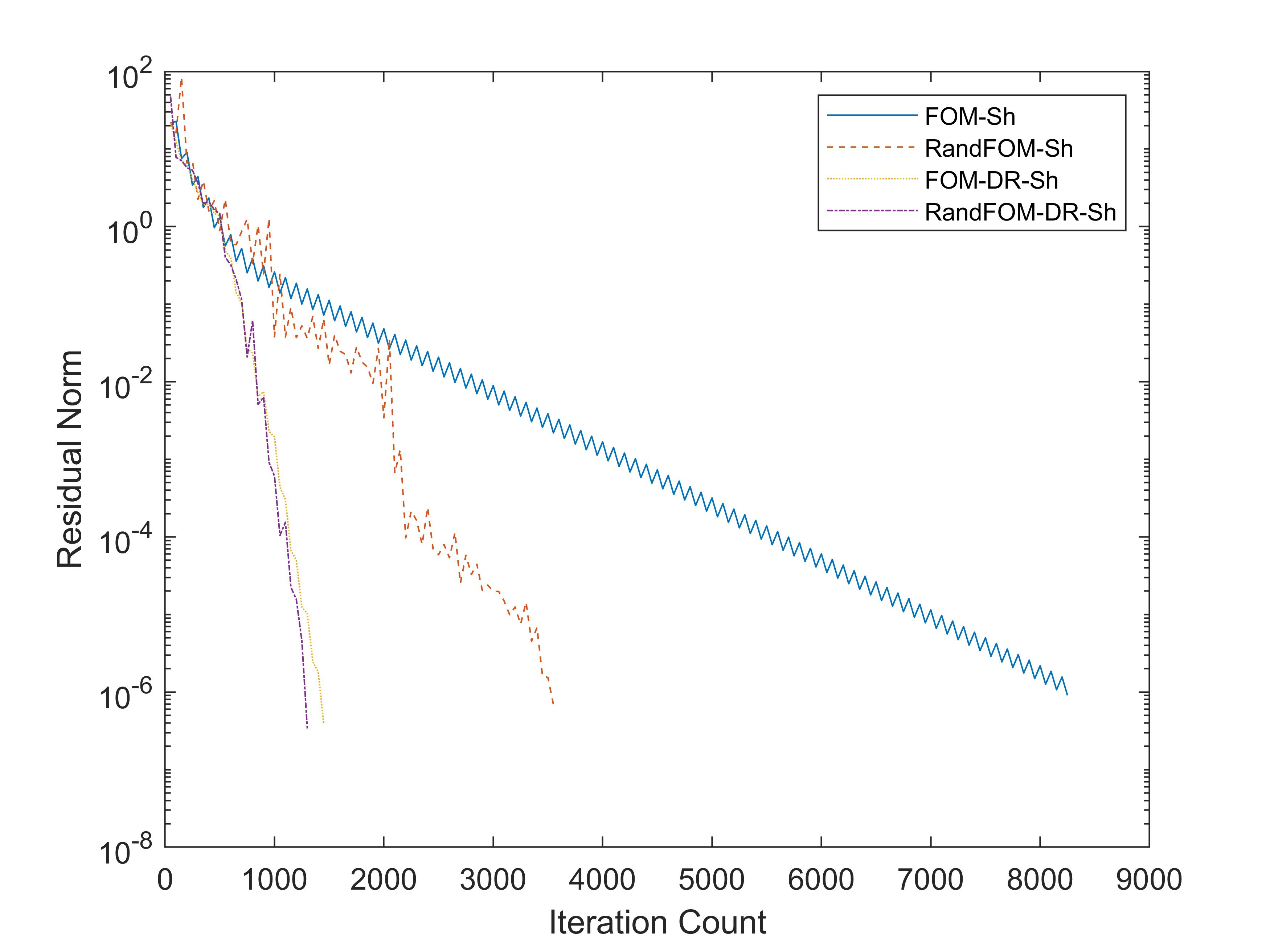}
  \caption{Convergence curves for large-scale matrix exponential computation}\label{tbep}
\end{figure}

In contrast, FOM-Sh and its variants show more consistent convergence behavior and more predictable performance. They still work well even when dealing with indefinite matrices or mixed positive and negative shifts, which proves they are much more robust. In addition, after adding sketching techniques, the computational efficiency is improved by at least two times. This shows that the proposed algorithms have a clear advantage over standard methods in solving large-scale shifted linear systems.
\section{Concluding remarks}
This work introduces four novel multi-shift algorithms incorporating randomized sketching techniques: RandGMRES-Sh, RandFOM-Sh, RandGMRES-DR-Sh, and RandFOM-DR-Sh. These are variants of GMRES-Sh, FOM-Sh, GMRES-DR-Sh, and FOM-DR-Sh, respectively, integrated with sketching. Moreover, the RandGMRES-DR and RandFOM-DR methods proposed in Section \ref{sec4} preserve the Arnoldi relation while maintaining computational costs comparable to currently popular deflated-restarting randomized algorithms. Our numerical experiments demonstrate that the proposed algorithms reduce the runtime by approximately half while maintaining a comparable number of iteration steps compared to the original methods (actual time savings vary depending on the problem being solved).

Furthermore, in terms of theoretical analysis, this paper proves that the two proposed methods for solving multi-shift systems of equations, namely RandGMRES-Sh and RandFOM-Sh, are also projection methods. We establish the relationship between the solutions and residuals of RandGMRES-Sh and RandFOM-Sh, along with the connection between RandGMRE S-Sh and GMRES-Sh, as well as between RandFOM-Sh and FOM-Sh. Finally, we prove the equivalence of the solution spaces of RandGMRES and sGMRES.
\section{Future Work and Outlook}
In addressing complex matrix problems, the performance of this method falls short of expectations. This is due to limitations in the available matrix dimensions on one hand, and the inherent properties of the random Sketching matrix on the other. However, in real matrix scenarios, the method performs relatively well. Additionally, we have observed that for problems related to dense matrices, the acceleration effect is often not significant enough. The main bottleneck lies in the computational cost of matrix-vector multiplication, rather than vector inner product. In the future, we aim to achieve further breakthroughs in these directions.

\end{document}